\numberwithin{equation}{section}
\newtheorem {Lemma}{Lemma}[section]
\newtheorem {Theorem} {Theorem}[section]
\newtheorem {Claim} {Claim}[section]
\begin{document}

\title{The spectral radius of unbalanced signed graphs without negative $C_3$ or $C_4$}

\author{Yiting Cai\footnote{E-mail: yitingcai@m.scnu.edu.cn}, 
Bo Zhou\footnote{E-mail: zhoubo@scnu.edu.cn}\\
School of  Mathematical Sciences, South China Normal University,\\
Guangzhou 510631, P.R. China}

\date{}
\maketitle

\begin{abstract}
A signed graph is a graph in which every edge carries a $+$ or a $-$ sign.
In this paper, we determine the signed graphs with maximum spectral radius among all unbalanced signed graphs with fixed order  that contain  neither negative three-cycles nor negative  four-cycles. \\ \\
{\bf Key words:} unbalanced signed graph, negative cycles, spectral radius\\ \\
{AMS Classification:} 05C50
\end{abstract}

\section{Introduction}

A signed graph $\Gamma$ is a pair $\Gamma= (G, \sigma)$ where $G$ is a simple graph, called the
underlying graph,  with vertex set $V(G)$ and edge set $E(G)$,  and  $\sigma : E(G) \rightarrow \{-1, 1\}$ is the sign function, called  the signature.
The vertex (resp. edge) set of $\Gamma$ is just the vertex (resp. edge) set of its underlying graph,
denoted by $V(\Gamma)$ (resp. $E(\Gamma)$).
An edge $e$ is positive
(resp. negative) if $\sigma(e) = 1$ (resp. $\sigma(e) = -1$).
If all edges of $\Gamma=(G, \sigma)$
are positive (resp. negative), then we denote it by $(G,+)$ (resp. $(G,-)$). $-\Gamma$ denotes the signed graph
obtained from $\Gamma$ by reversing the signs of all edges.
The adjacency matrix $A(\Gamma)$
of $\Gamma$ is obtained from the standard adjacency matrix of the underlying graph $G$ by reversing the sign of all $1$s which correspond to negative edges. That is, $A(\Gamma) = (a_{uv}^\sigma)_{u,v\in V(\Gamma)}$  with $a_{uv}^\sigma= \sigma(uv)$ if $u$ and $v$ are adjacent, and $0$ otherwise.
The eigenvalues of $A(\Gamma)$ are the eigenvalues of $\Gamma$, which are denoted by $\lambda_1(\Gamma)\ge \dots\ge \lambda_n(\Gamma)$, where $n$ is the order of $\Gamma$.
The spectral radius $\rho(\Gamma)$ of $\Gamma$ is the maximum absolute values of the eigenvalues of $\Gamma$. That is,
$\rho(\Gamma)=\max\{\lambda_1(\Gamma), -\lambda_n(\Gamma) \}$. If $\Gamma=(G, +)$, then $\rho(\Gamma)=\lambda_1(\Gamma)$ is the spectral radius $\rho(G)$ of $G$.

For a signed graph $\Gamma$ with $\emptyset\ne U\subset V(\Gamma)$.  Let $\Gamma^U$ be the signed graph obtained from $\Gamma$ by reversing the sign of each edge between a vertex in $U$ and a vertex in $V(\Gamma)\setminus U$.
The signed graphs  $\Gamma$ and $\Gamma^U$ are
said to be switching equivalent.
The switching equivalence is an equivalence relation that
preserves the eigenvalues.
A cycle in $\Gamma$ is said to be positive (or balanced) if it contains an even number of negative edges,
otherwise the cycle is  negative (or unbalanced).
$\Gamma=(G, \sigma)$ is said to be balanced if it is switching equivalent to $(G,+)$.
Otherwise, it is said to be unbalanced. Equivalently, $\Gamma$
is balanced if every cycle is positive, see \cite{Za}.
From \cite{St1}, for a signed graph $\Gamma= (G,\sigma)$, we have $\lambda_1(\Gamma)\le \lambda_1(G)$
with equality when it is connected if and only if $\Gamma$ is balanced. As in \cite{AB,BB}, we consider only unbalanced signed graphs when we study the extremal problems about the spectral radius or the largest eigenvalue.

Two signed graphs are called switching isomorphic if one of them is  a switching equivalent to an  isomorphic copy of  the other.

Given a set $\mathcal{F}$ of signed graphs, if a signed graph $\Gamma$ contains no signed subgraph isomorphic to any one in $\mathcal{F}$, then $\Gamma$ is called $\mathcal{F}$-free.

Denote by $\mathcal{K}_n^-$  the set of unbalanced signed complete graphs of order $n$, and for $n\ge 3$, $\mathcal{C}_n^-$ the set of unbalanced signed cycles of order $n$.

For a graph $G$ with an edge $e$, $G-e$ denotes the graph obtained from $G$ by removing the edge $e$. Denote by $K_n$ the complete graph of order $n$. For $n\ge 3$, $C_n$  denotes the cycle of order $n$.

For $r\ge 3$, the $\mathcal{K}_{r}^-$-free unbalanced  signed graphs of fixed order with maximum spectral radius have been determined for $r=3$ by Wang, Hou and Li \cite{WHL},
for $r=4$ by Chen and Yuan \cite{CY}, for $r=5$ by Wang \cite{Wa}, and for $r\ge 4$ by Xiong and  Hou \cite{XH}.
The $\mathcal{C}_4^-$-free (resp. $\mathcal{C}_{2k+1}^-$-free with $3\le k\le \frac{n-11}{10}$) unbalanced signed graphs of order $n$ with maximum largest eigenvalue (resp. spectral radius) have been determined by Wang and Lin \cite{WL} (resp. Wang, Hou and Huang \cite{WHH}).

Let $\mathcal{C}_{3,4}^-=\mathcal{C}_3^-\cup \mathcal{C}_4^-$.
In this paper, we  determine  the signed graphs with maximum spectral radius among all $\mathcal{C}_{3,4}^-$-free  unbalanced signed graphs of order $n\ge 5$. For $n\ge 5$, let $\Gamma_n$ be the signed graph
obtained from a copy of $K_{n-2}$
by removing the edge $uw$ and adding two new vertices $v_1$ and $v_2$ and three edges $v_1v_2, v_1u$ and $v_2w$, where $v_1v_2$ is the unique negative edge, see Fig.~\ref{f1}.
As the shortest negative cycles in $\Gamma_n$ are $C_5$s,  $\Gamma_n$ is a $\mathcal{C}_{3,4}^-$-free   unbalanced signed graph. It is evident that a $\mathcal{C}_{3,4}^-$-free  unbalanced signed graph of order $5$ is switching equivalent to $\Gamma_5$, a negative $C_5$ with $\rho(\Gamma_5)=-\lambda_5(\Gamma_5)=2>\lambda_1(\Gamma_5)=1.6180$.

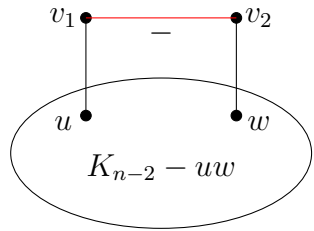
\begin{figure}[htbp]
\centering
\begin{tikzpicture}
\draw (2,0) ellipse (2 and 1);
\filldraw [black] (1,0.5) circle (2pt);
\filldraw [black] (3,0.5) circle (2pt);
\node at (2, -0.2) {$K_{n-2}-uw$};
\node at (0.7, 0.4) {$u$};
\node at (3.3, 0.4) {$w$};
\node at (0.7, 1.8) {$v_1$};
\node at (3.3, 1.8) {$v_2$};
\filldraw [black] (1,1.8) circle (2pt);
\filldraw [black] (3,1.8) circle (2pt);
\draw  [black](1,0.5)--(1,1.8);
\draw  [black](3,1.8)--(3,0.5);
\draw  [red](1,1.8)--(3,1.8);
\node at (2, 1.6) {$-$};
\end{tikzpicture}
\caption{The signed graph $\Gamma_n$.}
\label{f1}
\end{figure}

%
%
%
%

\begin{Theorem}\label{N1}
Let $\Gamma$ be a $\mathcal{C}_{3,4}^-$-free unbalanced signed graph of order $n\ge 6$. Then
\[
\rho(\Gamma)\le \gamma_n
\]
with equality if and only if $\Gamma$ is switching isomorphic to $\Gamma_n$, where $\gamma_n$ is the largest root of the equation $\lambda^3-(n-6)\lambda^2-3(n-4)\lambda-n+3=0$.
\end{Theorem}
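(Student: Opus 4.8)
\emph{Overview and reduction to $\lambda_1$.} The plan is to (1) reduce the problem to bounding the largest eigenvalue, (2) analyse a graph attaining the maximum $\lambda_1$ and show it must be $\Gamma_n$, and (3) compute $\rho(\Gamma_n)$; I would argue by induction on $n$, with $n=6$ as a base case, using that no unbalanced $\mathcal{C}_{3,4}^-$-free signed graph has fewer than $5$ vertices and that $\gamma_m$ is strictly increasing in $m$ (for fixed $\lambda>0$ the cubic decreases in $m$, so its largest root increases). For the reduction: since $\Gamma$ is $\mathcal{C}_3^-$-free, every triangle of $\Gamma$ is positive, hence every triangle of $-\Gamma$ is negative; triangle signs are switching invariant, so in \emph{every} switching of $-\Gamma$ the subgraph $G^{+}$ of positive edges is triangle-free. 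Switching $-\Gamma$ so that a $\lambda_1(-\Gamma)$-eigenvector $\mathbf{y}\ge 0$ is nonnegative and dropping the nonpositive contribution of the negative edges from $\mathbf{y}^{\top}A(-\Gamma)\mathbf{y}$, Nosal's theorem gives $-\lambda_n(\Gamma)=\lambda_1(-\Gamma)\le\lambda_1(G^{+})\le\sqrt{e(G^{+})}\le\sqrt{\lfloor n^{2}/4\rfloor}$, which is strictly below $\gamma_n$ for $n\ge 7$; the borderline case $n=6$ is settled by checking the few triangle-free graphs on $6$ vertices dense enough to matter (and using that the negative edges strictly lower the Rayleigh quotient). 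Hence $\rho(\Gamma)=\lambda_1(\Gamma)$, and it suffices to prove $\lambda_1(\Gamma)\le\gamma_n$ with equality only for $\Gamma_n$.

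\emph{Setting up the extremal graph.} Let $\Gamma$ maximise $\lambda_1$ among $\mathcal{C}_{3,4}^-$-free unbalanced signed graphs of order $n$, so $\lambda_1(\Gamma)\ge\lambda_1(\Gamma_n)=\gamma_n$. If $\Gamma$ were disconnected, the component attaining $\lambda_1(\Gamma)$ would be either balanced of order $\le n-5$, whence $\lambda_1\le n-6<\gamma_n$, or unbalanced of order $<n$, whence $\lambda_1\le\gamma_{n-1}<\gamma_n$ by induction; both contradict maximality, so $\Gamma$ is connected. After switching, fix a $\lambda_1$-eigenvector $\mathbf{x}\ge 0$. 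Restricting $\mathbf{x}$ to its support $S$ gives a positive $\lambda_1$-eigenvector of the $\mathcal{C}_{3,4}^-$-free graph $\Gamma[S]$, which therefore has largest eigenvalue $\lambda_1(\Gamma)$; if $|S|<n$ then either $\Gamma[S]$ is unbalanced and induction forces $\lambda_1\le\gamma_{|S|}<\gamma_n$, or $\Gamma[S]$ is balanced and dense, and then any vertex outside $S$ attached to it must (to avoid a negative triangle) join a large positive clique with edges all of one sign, producing no new negative cycle and contradicting unbalancedness or maximality. Hence $\mathbf{x}>0$, and we may further assume $\Gamma$ is maximal under adding positive edges that keep it $\mathcal{C}_{3,4}^-$-free, since $\mathbf{x}>0$ makes such an addition strictly increase $\lambda_1$. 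Finally, as $\Gamma$ is unbalanced and $\mathcal{C}_{3,4}^-$-free, a shortest negative cycle $C$ is chordless of length $g\ge 5$: a chord would split $C$ into two shorter cycles whose sign product is $-1$, forcing a negative $C_3$, a negative $C_4$, or a negative cycle shorter than $g$.

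\emph{Pinning down the structure.} Using the positivity and ordering of $\mathbf{x}$, Rayleigh-quotient comparisons, and the stability of both constraints under local moves, I would then establish in turn: (i) $g=5$, and after switching $C$ has a unique negative edge $ab$, which we write $C=a\,a'\,x\,b'\,b\,a$; (ii) the $n-2$ vertices other than $a$ and $b$ induce $K_{n-2}$ with exactly the edge $a'b'$ missing, all of whose edges are positive; (iii) $a$ is adjacent only to $b$ and $a'$, $b$ is adjacent only to $a$ and $b'$, and $ab$ is the unique negative edge of $\Gamma$. Together (i)--(iii) say precisely that $\Gamma$ is switching isomorphic to $\Gamma_n$ (with $u=a'$, $w=b'$). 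The crux — and the step I expect to be the main obstacle — is exactly this: one must show that \emph{any} deviation from this picture, namely a shortest negative cycle of length $\ge 6$, a second vertex of degree $2$ on it, a missing edge inside the clique part other than $a'b'$, or an extra edge incident to $a$ or $b$, admits a modification that stays $\mathcal{C}_{3,4}^-$-free and unbalanced while strictly increasing $\lambda_1$; controlling how the negative cycle may thread through the clique, so that such a move is always available, is the technical heart.

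\emph{Computing $\rho(\Gamma_n)$.} The partition $\{v_1\},\{v_2\},\{u\},\{w\},R$ of $V(\Gamma_n)$, with $R$ the set of the remaining $n-4$ vertices, is equitable for $A(\Gamma_n)$. Its symmetric eigenvectors (value $p$ on $\{v_1,v_2\}$, $q$ on $\{u,w\}$, $r$ on $R$) satisfy $(\lambda+1)p=q$, $\lambda q=p+(n-4)r$, and $(\lambda-n+5)r=2q$, and eliminating $p,q,r$ yields exactly $\lambda^{3}-(n-6)\lambda^{2}-3(n-4)\lambda-n+3=0$; the antisymmetric eigenvectors give $\lambda^{2}-\lambda-1=0$, and the remaining $n-5$ eigenvalues all equal $-1$. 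For $n\ge 6$ the largest root $\gamma_n$ of the cubic exceeds the golden ratio and also exceeds the absolute value of the most negative eigenvalue (another root of the cubic, lying in $(-3,-2)$), so $\rho(\Gamma_n)=\lambda_1(\Gamma_n)=\gamma_n$. Combined with the structural analysis, this proves $\rho(\Gamma)\le\gamma_n$ with equality if and only if $\Gamma$ is switching isomorphic to $\Gamma_n$.
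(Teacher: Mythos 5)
Your bookends are sound and essentially parallel the paper's: the reduction to $\rho(\Gamma)=\lambda_1(\Gamma)$ via the observation that every triangle of $-\Gamma$ is negative, so the positive part of any switching of $-\Gamma$ is triangle-free and Nosal gives $-\lambda_n(\Gamma)\le n/2$ (the paper gets the same $n/2$ from the balanced-clique bound of Lemma~\ref{wb}); and the computation of the spectrum of $\Gamma_n$ by symmetry/equitable partition, yielding the factorization $(\lambda^2-\lambda-1)\bigl(\lambda^3-(n-6)\lambda^2-3(n-4)\lambda-n+3\bigr)$ together with $-1$ of multiplicity $n-5$, agrees with Claim~\ref{AA}. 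The difficulty is that the entire middle of your argument --- which is the actual content of the theorem --- is not a proof but a to-do list: you state targets (i)--(iii) and then declare that showing every deviation admits a $\lambda_1$-increasing modification ``is the technical heart.'' That heart is what the paper spends almost all of its length on (the shortest negative cycle is induced, carries exactly one negative edge, no negative edge lies outside it, it has length $5$, and the neighbourhood structure collapses to the two-parameter family of Fig.~\ref{f4}), and none of it is supplied.

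Moreover, the specific mechanism you propose for that missing part cannot work as stated. The signed graphs $\Gamma_{n,\tau}$ with $1\le\tau\le n-6$ are $\mathcal{C}_{3,4}^-$-free, unbalanced, and deviate from your target (iii) (the vertex $v_1$ carries $\tau$ extra edges), yet when $n-2\tau-4=0$ one has $f_\tau=f_{\tau-1}$, so $\lambda_1(\Gamma_{n,\tau})=\lambda_1(\Gamma_{n,\tau-1})$ and the natural local move does \emph{not} strictly increase $\lambda_1$; the paper eliminates this family not by a perturbation but by a global comparison of the characteristic polynomials $f_\tau$ in Lemma~\ref{N2}(i), a step your plan has no analogue of. Two further gaps: your assertion that the Perron-type eigenvector has full support rests on a hand-waved dismissal of the case where the support induces a balanced dense subgraph (which can have $\lambda_1$ up to $n-2>\gamma_n$, so it is not dismissible on eigenvalue grounds); the paper instead proves only ``at most two zero entries'' (Claim~\ref{C2}) and then carries the possible zeros through Claims~\ref{z2}--\ref{C4}. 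And your induction needs the order-$5$ case (the negative $C_5$) handled separately, since the cubic $\gamma_m$ is only defined for $m\ge 6$.
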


\section{Preliminaries}

Most of the concepts like connectedness and degrees of vertices defined for graphs are naturally extended to signed graphs.
Let $\Gamma=(G, \sigma)$ be a signed graph. For a vertex $u$ in $\Gamma$, we denote by $N_\Gamma(u)$ and $d_\Gamma(u)$ the neighborhood and the degree $u$ in $\Gamma$, respectively.
For $\emptyset\ne U\subset V(\Gamma)$, let $\Gamma-U$ be the signed graph obtained from $\Gamma$ be removing the vertices in $U$ and every edge incident to a vertex of $U$, and $\Gamma[U]$ denotes the signed graph
$\Gamma-(V(\Gamma)\setminus U)$. The underlying graph of $\Gamma-U$ is $G-U$.  A chord of a cycle $C$ is an edge between two non-consecutive vertices of the cycle.

Let $M$ be an $n\times n$ matrix whose rows and columns are indexed by elements in $X=\{1, \dots, n\}$.  Let $\pi=\{X_1, \dots, X_s\}$ be a partition of $X$. For $1\le i,j\le s$, let $M_{ij}$ be the submatrix of $M$ whose rows and columns are indexed
by elements of $X_i$ and $X_j$. The partition $\pi$ is  equitable if the row sum of each $M_{ij}$ is a constant for $1\le i,j\le s$.
The $s\times s$ matrix whose $(i,j)$-entry is the average row sum of $M_{ij}$ with $1\le i,j\le s$ is called the quotient matrix of $M$.

\begin{Lemma}\label{QM}\cite[Lemma 2.3.1]{BH}
Let $M$ be a real symmetric matrix. Then the spectrum of the quotient matrix of $M$ with respect to an equitable partition is contained in the spectrum of $M$.
\end{Lemma}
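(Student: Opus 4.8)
The plan is to exhibit an explicit intertwining relation between $M$ and its quotient matrix that forces the eigenvalues of the quotient matrix to appear among those of $M$. Write $\pi=\{X_1,\dots,X_s\}$ and let $b_{ij}$ denote the common row sum of the block $M_{ij}$, so that the quotient matrix is $B=(b_{ij})_{1\le i,j\le s}$. I would introduce the characteristic matrix $S$ of the partition: the $n\times s$ matrix whose $(u,j)$-entry is $1$ if $u\in X_j$ and $0$ otherwise. Since $X_1,\dots,X_s$ are nonempty and pairwise disjoint, the columns of $S$ are nonzero with pairwise disjoint supports, so $S$ has full column rank $s$.

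First I would verify the key identity $MS=SB$ entrywise. For $u\in X_i$ and $1\le j\le s$, the $(u,j)$-entry of $MS$ equals $\sum_{v\in X_j}M_{uv}$, which is precisely the row sum of row $u$ of the block $M_{ij}$; by equitability this equals $b_{ij}$. On the other hand the $(u,j)$-entry of $SB$ equals $\sum_{k}S_{uk}b_{kj}=b_{ij}$, since $u\in X_i$ makes $S_{uk}$ nonzero only for $k=i$. Hence $MS=SB$.

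Next I would lift eigenvectors of $B$ to eigenvectors of $M$. If $Bv=\mu v$ with $v\neq 0$, then $M(Sv)=(MS)v=(SB)v=S(Bv)=\mu\,(Sv)$; because $S$ has full column rank, $Sv\neq 0$, so $\mu$ is an eigenvalue of $M$ with eigenvector $Sv$. As $\mu$ was an arbitrary eigenvalue of $B$, every eigenvalue of $B$ is an eigenvalue of $M$, which already gives the set-theoretic containment of spectra.

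To also capture multiplicities (which is what containment of spectra is usually taken to mean), I would pass to an orthonormal version. The identity $MS=SB$ shows that the column space of $S$ is $M$-invariant. Setting $D=\mathrm{diag}(|X_1|,\dots,|X_s|)$ and $\hat S=SD^{-1/2}$, one checks $\hat S^{\top}\hat S=I_s$, so the columns of $\hat S$ form an orthonormal basis of that subspace, and a direct computation using $S^{\top}MS=DB$ gives $\hat S^{\top}M\hat S=D^{1/2}BD^{-1/2}$, a matrix similar to $B$. Since $M$ is real symmetric, $\mathbb{R}^n$ splits as the orthogonal direct sum of this invariant subspace and its orthogonal complement, each invariant under $M$; thus $\hat S^{\top}M\hat S$ is the matrix of the restriction of $M$ to an invariant subspace in an orthonormal basis, and its characteristic polynomial divides that of $M$. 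Hence the eigenvalues of $B$, with multiplicities, form a subset of those of $M$. I expect this invariance-and-multiplicity step to be the only delicate point: the bare relation $MS=SB$ yields containment as sets, but to conclude that the full multiset of eigenvalues of $B$ lies inside the spectrum of $M$ one must combine orthonormality of $\hat S$ with symmetry of $M$.
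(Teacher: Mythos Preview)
The paper does not give its own proof of this lemma; it is simply quoted from Brouwer--Haemers \cite[Lemma 2.3.1]{BH}. Your argument is correct and is essentially the standard textbook proof one finds there: establish the intertwining relation $MS=SB$ via the characteristic matrix $S$ of the partition, then lift eigenvectors (and, for multiplicities, orthonormalize to $\hat S=SD^{-1/2}$ so that $\hat S^{\top}M\hat S$ is similar to $B$ and represents $M$ on an invariant subspace). There is nothing to add or correct.
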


For integers $n$ and $\tau$ with  $n\ge 6$ and $0\le\tau\le n-5$, we define a signed graph
$\Gamma_{n, \tau}$ on vertices $v_1,\dots, v_n$ as displayed in Fig. \ref{f2}, where
$V(\Gamma_{n,\tau})=\{v_1,v_2, v_3, v_5\}\cup X\cup Y$ with $|X|=\tau$,  $|Y|=n-4-\tau$ and $v_4\in Y$,
$v_1v_2$ is the unique negative edge, $v_1\dots v_5v_1$ is an induced cycle, $d_{\Gamma_{n, \tau}}(v_1)=2+\tau$,  $d_{\Gamma_{n, \tau}}(v_2)=2$,
$\Gamma_{n, \tau}[X\cup\{v_1, v_5\}]=(K_{\tau+2}, +)$, $\Gamma_{n, \tau}[Y\cup\{v_3, v_5\}]=(K_{n-\tau-2}-v_3v_5, +)$, and
$\Gamma_{n,\tau}[X\cup Y]=(K_{n-4}, +)$.
With a proper labeling of the vertices,  $\Gamma_{n}$ is (isomorphic to) $\Gamma_{n,0}$.

Denote by $I_n$ the identity matrix of order $n$.

\begin{figure}[htbp]
\centering
\begin{tikzpicture}
\draw [red](0,1) .. controls (2,3) and (4,3).. (6,1);
\draw  [black](0,1)--(2,1)--(3.02,-0.3)--(4,1)--(6,1);
\filldraw [black] (0,1) circle (2pt);
\filldraw [black] (2,1) circle (2pt);
\filldraw [black] (4,1) circle (2pt);
\filldraw [black] (6,1) circle (2pt);
\node at (1, -0.4) {$X$};
\node at (4.4, -0.4) {$Y$};
\draw (1,-0.4) ellipse (1.4 and 0.5);
\draw (4.4,-0.4) ellipse (1.4 and 0.5);
\node at (2.7, -1.2) {$\underbrace{~~~~~~~~~~~~~~~~~~~~~~~}$};
\node at (2.7, -1.7) {$K_{n-4}$};
\node at (3,2.2) {$-$};
\node at (-0.3,1) {$v_1$};
\node at (2,1.3) {$v_5$};
\node at (4,1.3) {$v_3$};
\node at (6.3,1) {$v_2$};
\draw  [black](0,1)--(-0.3,-0.21);
\draw  [black](0,1)--(2.3,-0.2)--(2,1);
\draw  [black](2,1)--(-0.3,-0.2);
\draw  [black](0,1)--(2,1)--(5.5,-0.08)--(4,1);
\end{tikzpicture}
\caption{The signed graph $\Gamma_{n,\tau}$.}
\label{f2}
\end{figure}
\begin{Lemma}\label{N2}
Let $\Gamma_{n, \tau}$ be the signed graph in Fig.~2 with $n\ge 6$ and $0\le\tau\le n-5$.

(i)
$\lambda_1(\Gamma_{n, \tau})\le \lambda_1(\Gamma_{n,0})= \lambda_1(\Gamma_{n, n-5})$
with equality if and only if $\tau=0, n-5$.

(ii) $\lambda_1(\Gamma_{n,0})>n-4$ and $\lambda_1(\Gamma_{n,0})$ is equal to the largest root of
$\lambda^3-(n-6)\lambda^2-3(n-4)\lambda-n+3=0$.

\end{Lemma}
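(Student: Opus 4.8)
The plan is to handle both parts through equitable partitions and the quotient–matrix technique of Lemma~\ref{QM}, reducing each largest eigenvalue to that of a $5\times 5$ matrix; I would prove (ii) first and then use it in (i).

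For part (ii), partition $V(\Gamma_{n,0})$ into the five cells $\{v_1\},\{v_2\},\{v_3\},\{v_5\}$ and $Y:=V(\Gamma_{n,0})\setminus\{v_1,v_2,v_3,v_5\}$ (the last cell inducing the positive clique $K_{n-4}$). One checks this partition is equitable; let $B$ be the resulting $5\times 5$ quotient matrix. Since $B$ is similar to a symmetric matrix its eigenvalues are real, and every vector orthogonal to the span of the partition is supported on $Y$ with zero coordinate sum, hence — as $\Gamma_{n,0}[Y]=(K_{n-4},+)$ and the edges joining $Y$ to $v_3,v_5$ are uniform — affords the eigenvalue $-1$. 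Thus the spectrum of $\Gamma_{n,0}$ is that of $B$ together with $-1$ of multiplicity $n-5$. A direct computation — or, more transparently, solving the eigenvalue equations for $B$, which first yield $(\lambda^2-\lambda-1)(x_{v_1}-x_{v_2})=0$ and then split into the two cases — gives
\[
\det(\lambda I-B)=(\lambda^2-\lambda-1)\bigl(\lambda^3-(n-6)\lambda^2-3(n-4)\lambda-n+3\bigr).
\]
Writing $f$ for the cubic factor, a sign check of $f$ at $-\infty,-1,0,n-4,+\infty$ shows $f$ has three distinct real roots, one in each of $(-\infty,-1)$, $(-1,0)$, $(n-4,+\infty)$; call the largest one $\gamma_n$, so $\gamma_n>n-4>\tfrac{1+\sqrt5}{2}$. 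Since $\{v_5\}\cup Y$ induces $(K_{n-3},+)$, interlacing gives $\lambda_1(\Gamma_{n,0})\ge n-4>\tfrac{1+\sqrt5}{2}>-1$, which forces $\lambda_1(\Gamma_{n,0})=\gamma_n$; and $f(n-4)=-(n-4)^2-(n-3)<0$ with $f(\lambda)\to+\infty$ reconfirms $\gamma_n>n-4$. This proves (ii).

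For part (i), use the analogous equitable partition of $\Gamma_{n,\tau}$ with cells $\{v_1\},\{v_2\},\{v_3\},X\cup\{v_5\}$ and $Y$; again it is equitable, the eigenvalues outside the quotient are all $-1$, and $\{v_5\}\cup X\cup Y$ induces $(K_{n-3},+)$, so $\lambda_1(\Gamma_{n,\tau})\ge n-4>-1$ and $\lambda_1(\Gamma_{n,\tau})$ equals the largest root of $g_\tau(\lambda):=\det(\lambda I-B_{n,\tau})$, where $B_{n,\tau}$ is the $5\times 5$ quotient matrix. The crux is the identity
\[
g_\tau(\lambda)=g_0(\lambda)+(2\lambda^2+\lambda)\,\tau(n-5-\tau),
\]
which I would prove in three steps: (a) after substituting $|Y|=n-4-\tau$, the parameter $\tau$ occurs only in the two columns of $\lambda I-B_{n,\tau}$ indexed by $X\cup\{v_5\}$ and $Y$, so $g_\tau$ has degree at most $2$ in $\tau$; (b) $\Gamma_{n,\tau}$ is switching isomorphic to $\Gamma_{n,n-5-\tau}$ (interchange $v_1$ with $v_3$ and $X\cup\{v_5\}$ with $Y$, then switch at $v_2$), so $g_\tau=g_{n-5-\tau}$ and hence $g_\tau-g_0=\alpha(\lambda)\,\tau(\tau-(n-5))$ with $\alpha(\lambda)$ the coefficient of $\tau^2$; (c) by multilinearity in those two columns, $\alpha(\lambda)$ is a single $5\times 5$ determinant, which evaluates to $-(2\lambda^2+\lambda)$.

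Granting the identity, the conclusion is immediate: for $\lambda\ge\gamma_n$ one has $g_0(\lambda)\ge 0$ (because $\gamma_n$ is the largest, and simple, root of $g_0$, using part (ii) and the distinctness of the roots of $f$) and $(2\lambda^2+\lambda)\,\tau(n-5-\tau)\ge 0$, the latter strictly positive exactly when $0<\tau<n-5$; hence for such $\tau$ we get $g_\tau(\lambda)>0$ for all $\lambda\ge\gamma_n$, so $\lambda_1(\Gamma_{n,\tau})<\gamma_n=\lambda_1(\Gamma_{n,0})$, while $g_\tau=g_0$ for $\tau\in\{0,n-5\}$ yields $\lambda_1(\Gamma_{n,n-5})=\lambda_1(\Gamma_{n,0})$. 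The main obstacle is establishing the displayed identity for $g_\tau$ cleanly — isolating the quadratic dependence on $\tau$, justifying the switching isomorphism that forces the symmetry $g_\tau=g_{n-5-\tau}$, and carrying out the determinant for $\alpha(\lambda)$; after that everything reduces to the sign of a product. A secondary point needing care throughout is the reduction step: verifying that each partition is equitable and that the spectrum outside the $5\times 5$ quotient consists only of $-1$'s, so that ``$\lambda_1$ of $\Gamma_{n,\tau}$'' genuinely coincides with ``the largest root of $g_\tau$''.
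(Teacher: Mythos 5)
Your proposal is correct, and it rests on the same foundation as the paper's proof: the equitable partition into $\{v_1\},\{v_2\},\{v_3\},X\cup\{v_5\},Y$, the observation that all eigenvalues outside the $5\times5$ quotient equal $-1$, and for (ii) the factorization of the quotient's characteristic polynomial as $(\lambda^2-\lambda-1)$ times the cubic, followed by the same sign checks at $n-4$. Where you genuinely diverge is in part (i). The paper expands the quotient's characteristic polynomial $f_\tau$ explicitly as a quintic, computes the first difference $f_\tau(\lambda)-f_{\tau-1}(\lambda)=(n-2\tau-4)\lambda(2\lambda+1)$, and then runs a three-case monotonicity argument according to the sign of $n-2\tau-4$, together with the single switching equivalence $\Gamma_{n,n-5}\sim\Gamma_{n,0}$. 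You instead derive the closed form $g_\tau(\lambda)=g_0(\lambda)+(2\lambda^2+\lambda)\tau(n-5-\tau)$ --- which is exactly the telescoped version of the paper's difference formula, and I have verified that your coefficient $\alpha(\lambda)=-(2\lambda^2+\lambda)$ is correct --- by combining multilinearity in the two $\tau$-dependent columns with the symmetry $g_\tau=g_{n-5-\tau}$, the latter coming from the switching isomorphism $\Gamma_{n,\tau}\cong\Gamma_{n,n-5-\tau}$ (swap $v_1\leftrightarrow v_3$ and $X\cup\{v_5\}\leftrightarrow Y$, then switch at $v_2$), an observation the paper makes only for $\tau\in\{0,n-5\}$. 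This buys you two things: you never need the full quintic, and the conclusion of (i) collapses to the single inequality $g_\tau(\lambda)>0$ for $\lambda\ge\gamma_n$ and $0<\tau<n-5$, with the equality cases read off directly, in place of the paper's case analysis. The price is that you must justify the general switching isomorphism and evaluate one auxiliary $5\times5$ determinant; both steps are sound as written, so I see no gap.
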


\begin{proof} First, we show the following claim.

\begin{Claim} \label{AA}
For $0\le \tau\le n-5$, the eigenvalues of
$\Gamma_{n, \tau}$ are $-1$ with multiplicity $n-5$ and the five roots of $f_\tau(\lambda)=0$, where
\begin{align*}
f_\tau(\lambda) &=
\lambda^5-(n-5)\lambda^4-(2n-5)\lambda^3+(2n\tau+3n-2\tau^2-10\tau-15)\lambda^2\\
&\quad +(n\tau+4n-\tau^2-5\tau-15)\lambda+n-3.
\end{align*}
\end{Claim}

\begin{proof}
Observe that the $\tau+1$ rows of $I_n+A(\Gamma_{n, \tau})$  corresponding to vertices in $\{v_5\}\cup X$ are equal
and the $n-\tau-4$ rows of $I_n+A(\Gamma_{n, \tau})$  corresponding to vertices in $Y$ are equal. So the rank of $I_n+A(\Gamma_{n, \tau})$ is at most $5$, implying that $-1$ is an eigenvalue of multiplicity at least $n-5$.
Let $V_i=\{v_i\}$ for $i=1,2,3$, $V_4=\{v_5\}\cup X$ and $V_5=Y$.
Then the quotient matrix of $A(\Gamma_{n,\tau})$ with respect to the equitable partition $V(\Gamma_{n,\tau})=V_1\cup \dots\cup V_5$  is
\[
\begin{pmatrix}
0 & -1 & 0 &  \tau+1 & 0\\
-1 & 0 & 1 &  0 & 0\\
0 & 1 & 0 &  0 & n-\tau-4\\
1 & 0 & 0 &  \tau & n-\tau-4\\
0 & 0 & 1 &  \tau+1 & n-\tau-5\\
\end{pmatrix},
\]
whose characteristic polynomial is
$f_\tau(\lambda)$.
As $f_\tau(-1)=(\tau+1)(n-\tau-4)\ne 0$, $-1$ is not a root of $f_\tau(\lambda)=0$.
By Lemma \ref{QM}, the eigenvalues of
$\Gamma_{n, \tau}$ are $-1$ with multiplicity $n-5$ and the five roots of $f_\tau(\lambda)=0$.
\end{proof}

By Claim \ref{AA}, $\lambda_1(\Gamma_{n,\tau})$ is equal to the largest root  of $f_\tau(\lambda)=0$.

Now we prove (i).
Let $U=\{v_2\}$. Then $\Gamma_{n, n-5}^U$ is isomorphic to $\Gamma_{n,0}$, so $\Gamma_{n, n-5}$ is switching equivalent to $\Gamma_{n,0}$. So $\lambda_1(\Gamma_{n,0})= \lambda_1(\Gamma_{n, n-5})$.
If $n=6$, then  (i) is trivial as $\tau=0,1$. Suppose that $n\ge 7$ and $1\le \tau\le n-6$. It suffices to show that  $\lambda_1(\Gamma_{n, \tau})<\lambda_1(\Gamma_{n,0})$. Note that
\[
f_\tau(\lambda)-f_{\tau-1}(\lambda)=(n-2\tau-4)\lambda(2\lambda+1).
\]
If $n-2\tau-4>0$, then $f_\tau(\lambda)>f_{\tau-1}(\lambda)$ for $\lambda>0$, so
$\lambda_1(\Gamma_{n,\tau})<\lambda_1(\Gamma_{n,\tau-1})<\dots<\lambda_1(\Gamma_{n,0})$.
If $n-2\tau-4<0$, then $f_\tau(\lambda)<f_{\tau-1}(\lambda)$ and so $f_{\tau+1}(\lambda)<f_{\tau}(\lambda)$ for $\lambda>0$, implying that $\lambda_1(\Gamma_{n,\tau})<\lambda_1(\Gamma_{n,\tau+1})<\dots<\lambda_1(\Gamma_{n, n-5})$.
Suppose that $n-2\tau-4=0$. Then $f_\tau(\lambda)=f_{\tau-1}(\lambda)$. In this case
$f_{\tau-1}(\lambda)-f_{\tau-2}(\lambda)=2\lambda(2\lambda+1)>0$
for $\lambda>0$,
so $f_{\tau-1}(\lambda)>f_{\tau-2}(\lambda)$, implying that $\lambda_1(\Gamma_{n,\tau})=\lambda_1(\Gamma_{n,\tau-1})
<\lambda_1(\Gamma_{n,\tau-2})<\dots<\lambda_1(\Gamma_{n,0})$. Now (i) follows.

To prove (ii), note that
\[
f_0(\lambda)=(\lambda^2-\lambda-1)g(\lambda),
\]
where $g(\lambda)=\lambda^3-(n-6)\lambda^2-3(n-4)\lambda-n+3$.
It can be checked that $g(n-4)=-n^2+7n-13<0$ and $g(n-3)=2n-6>0$, so $g(\lambda)=0$ has a root larger than $n-4$, which is also larger than the larger root of $\lambda^2-\lambda-1=0$. So $\lambda_1(\Gamma_{n, 0})>n-4$, and
 $\lambda_1(\Gamma_{n, 0})$ is  equal to the largest root  of $g(\lambda)=0$.
\end{proof}

\begin{Lemma}\label{SE}\cite{Z1}
Two signed graphs with the same underlying graph are switching equivalent if and only if they have the same class of positive cycles.
\end{Lemma}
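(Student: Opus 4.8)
The plan is to prove both directions by comparing $\sigma_1$ and $\sigma_2$ through a single auxiliary signature. Write $\Gamma_1=(G,\sigma_1)$ and $\Gamma_2=(G,\sigma_2)$ with common underlying graph $G$, and encode a switching by a function $\theta\colon V(G)\to\{-1,1\}$, so that switching $\sigma$ by $\theta$ produces the signature $\sigma^\theta(uv)=\theta(u)\sigma(uv)\theta(v)$; taking $\theta=-1$ exactly on $U$ recovers the operation $\Gamma\mapsto\Gamma^U$ from the text, and switching equivalence of $\Gamma_1,\Gamma_2$ means $\sigma_2=\sigma_1^\theta$ for some $\theta$. For the ``only if'' direction, suppose $\sigma_2=\sigma_1^\theta$. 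Along any cycle $C=u_1u_2\cdots u_ku_1$ each vertex is an endpoint of exactly two consecutive edges, so $\prod_i\sigma_2(u_iu_{i+1})$ equals $\prod_i\sigma_1(u_iu_{i+1})$ times $\prod_i\theta(u_i)^2=1$. Hence switching preserves the sign of every cycle, and $\Gamma_1,\Gamma_2$ have the same class of positive cycles.

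For the converse, define the ratio signature $\tau(e)=\sigma_1(e)\sigma_2(e)\in\{-1,1\}$. For any cycle $C$ one has $\prod_{e\in C}\tau(e)=\bigl(\prod_{e\in C}\sigma_1(e)\bigr)\bigl(\prod_{e\in C}\sigma_2(e)\bigr)$, so the hypothesis that $\sigma_1$ and $\sigma_2$ have exactly the same positive cycles is equivalent to every cycle being positive in $(G,\tau)$. The goal then becomes to find $\theta$ with $\tau(uv)=\theta(u)\theta(v)$ on every edge, for then $\sigma_1^\theta=\sigma_1\tau=\sigma_2$ edgewise and we are done. Since each connected component may be handled separately, assume $G$ is connected.

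I would construct $\theta$ on a spanning tree and verify it globally. Fix a spanning tree $T$ of $G$ rooted at a vertex $r$, set $\theta(r)=1$, and for each vertex $v$ let $\theta(v)$ be the product of $\tau$ over the edges of the unique $r$--$v$ path in $T$. For a tree edge $uv$ with $u$ the parent of $v$, $\theta(v)=\theta(u)\tau(uv)$ gives $\theta(u)\theta(v)=\tau(uv)$ immediately. For a non-tree edge $uv$, that edge together with the tree path between $u$ and $v$ forms the fundamental cycle $C_{uv}$, which is positive, so $\tau(uv)=\prod_{e\in P}\tau(e)$, where $P$ is the $u$--$v$ path in $T$; since the portions of the $r$--$u$ and $r$--$v$ paths above their least common ancestor cancel, $\theta(u)\theta(v)$ equals exactly that same product, giving $\tau(uv)=\theta(u)\theta(v)$. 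Thus $\tau$ is realized by $\theta$ and $\Gamma_2=\Gamma_1^\theta$.

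The step worth isolating as the main obstacle is the non-tree edge verification, where one must recognize that the hypothesis, reformulated as all-positivity of $\tau$, is exactly what forces the locally defined $\theta$ to close up consistently around each fundamental cycle. Only positivity of the fundamental cycles is actually invoked, and the hypothesis certainly supplies it; conversely this is enough because every cycle lies in the span of the fundamental cycles over $\mathrm{GF}(2)$, so no independent condition is hidden. Alternatively, one may bypass the explicit construction entirely and apply the stated equivalence ``balanced $\Leftrightarrow$ every cycle positive'' to $(G,\tau)$, since by definition balance means switching equivalence to $(G,+)$. Reducing to connected $G$ and checking that $\theta$ is independent of path choices are routine once the fundamental-cycle identity is in hand.
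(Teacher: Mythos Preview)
Your argument is correct. The ``only if'' direction is the standard telescoping computation, and for the converse your reduction to the ratio signature $\tau=\sigma_1\sigma_2$ followed by the spanning-tree construction of a switching function $\theta$ is the classical proof of Harary's balance theorem; the fundamental-cycle check is handled properly.

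There is nothing to compare against in the paper: Lemma~\ref{SE} is stated with a citation to Zaslavsky~\cite{Z1} and is not proved in the paper itself. Your write-up therefore supplies a self-contained proof where the paper simply quotes the literature. If anything, your closing remark already points to the shortest route available in this paper's context: since the Introduction records that a signed graph is balanced if and only if every cycle is positive, one may apply that equivalence directly to $(G,\tau)$ to obtain a switching taking $(G,\tau)$ to $(G,+)$, and the same $\theta$ then carries $\Gamma_1$ to $\Gamma_2$.
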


Stani\'{c}  \cite{Stan} noted that for an eigenvalue $\lambda$ of a signed graph $\Gamma$, there is a switching equivalent signed graph for which the $\lambda$-eigenspace contains an eigenvector whose non-zero entries are of the same sign.

\begin{Lemma}\label{L+}\cite{Stan,SL}
A signed graph $\Gamma$ is switching equivalent a signed graph $\Gamma'$ such that $\lambda_1(\Gamma')$ has a non-negative eigenvector.
\end{Lemma}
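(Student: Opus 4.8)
The plan is to exploit the fact that switching a signed graph amounts to a diagonal $\pm 1$ similarity of its adjacency matrix, and then to choose the switching so that a fixed $\lambda_1$-eigenvector is carried to a non-negative vector.

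First I would record the matrix description of switching. For $\emptyset\ne U\subset V(\Gamma)$, let $D=\mathrm{diag}(d_v)_{v\in V(\Gamma)}$ be the diagonal matrix with $d_v=-1$ if $v\in U$ and $d_v=+1$ otherwise. Since switching at $U$ reverses precisely the signs of the edges having one endpoint in $U$ and the other in $V(\Gamma)\setminus U$, an entrywise check gives $A(\Gamma^U)_{uv}=d_u d_v\,A(\Gamma)_{uv}$, that is, $A(\Gamma^U)=D\,A(\Gamma)\,D$. Because $D$ is a symmetric involution ($D^2=I_n$), this is a similarity transformation, so $A(\Gamma^U)$ and $A(\Gamma)$ share the same spectrum; in particular $\lambda_1(\Gamma^U)=\lambda_1(\Gamma)$, consistent with the switching invariance of the eigenvalues already noted in the excerpt.

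Next I would produce the desired $\Gamma'$. Let $x$ be a real eigenvector of $A(\Gamma)$ for $\lambda_1(\Gamma)$, and set $U=\{v\in V(\Gamma): x_v<0\}$ (vertices with $x_v=0$ are kept outside $U$). If $U=\emptyset$ then $x$ is already non-negative and we take $\Gamma'=\Gamma$; otherwise take $\Gamma'=\Gamma^U$ with $D$ as above, and put $y=Dx$. Using $A(\Gamma')=D\,A(\Gamma)\,D$, $D^2=I_n$, and $A(\Gamma)x=\lambda_1 x$, we compute $A(\Gamma')y=D\,A(\Gamma)\,D(Dx)=D\,A(\Gamma)x=\lambda_1 Dx=\lambda_1 y$, so $y$ is an eigenvector of $A(\Gamma')$ for $\lambda_1(\Gamma')=\lambda_1(\Gamma)$. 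Finally $y_v=d_v x_v=|x_v|\ge 0$ for every $v$, whence $y$ is a non-negative eigenvector associated with $\lambda_1(\Gamma')$, as required.

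The argument carries essentially no obstacle: its entire content is the identity $A(\Gamma^U)=D\,A(\Gamma)\,D$, after which choosing $U$ to be the negative support of $x$ forces $Dx\ge 0$. The only point deserving a word of care is the treatment of the zero entries of $x$, which we absorb harmlessly into $V(\Gamma)\setminus U$ so that the corresponding entries of $y$ remain $0$. This is precisely the specialization to $\lambda=\lambda_1$ of Stani\'{c}'s observation recalled just above, which more generally yields, for any eigenvalue $\lambda$, a switching-equivalent signed graph admitting a $\lambda$-eigenvector whose non-zero entries share a common sign.
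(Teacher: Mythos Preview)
Your argument is correct and is exactly the standard proof of this fact. Note, however, that the paper does not supply its own proof of this lemma: it is stated with a citation to \cite{Stan,SL}, and the sentence preceding it merely paraphrases Stani\'c's observation. So there is nothing in the paper to compare your approach against; your write-up simply fills in the omitted (and well-known) details via the diagonal similarity $A(\Gamma^U)=D A(\Gamma) D$ and the choice $U=\{v:x_v<0\}$. One cosmetic point: if every entry of $x$ is negative then $U=V(\Gamma)$, which the paper's definition of switching may or may not allow; in that case simply replace $x$ by $-x$ and take $\Gamma'=\Gamma$.
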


The balanced clique number of $\Gamma$, denoted by $\omega_b(\Gamma)$, is the maximum order of a balanced complete subgraph.

\begin{Lemma}\label{wb}\cite{WYQ}
Let $\Gamma$ be a signed graph of order $n$. Then
\[
\lambda_1(\Gamma)\le n\left(1-\frac{1}{\omega_b(\Gamma)}\right).
\]
\end{Lemma}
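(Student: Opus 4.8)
The plan is to reduce this signed statement to the ordinary (unsigned) Wilf-type bound by first passing to a convenient switching representative and then discarding the negative edges. First I would invoke Lemma~\ref{L+} to replace $\Gamma$ by a switching equivalent signed graph $\Gamma'$ for which $\lambda_1(\Gamma')=\lambda_1(\Gamma)$ admits a nonnegative eigenvector. Since switching preserves the eigenvalues and also preserves the class of positive cycles (so a balanced complete subgraph remains balanced, whence $\omega_b$ is a switching invariant), we have $\lambda_1(\Gamma')=\lambda_1(\Gamma)$ and $\omega_b(\Gamma')=\omega_b(\Gamma)$; thus it suffices to prove the inequality for $\Gamma'$, and I would simply rename it $\Gamma$. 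I would then fix a nonnegative eigenvector $x\ge 0$ for $\lambda_1(\Gamma)$, normalized so that $\|x\|_2=1$, so that $\lambda_1(\Gamma)=x^{\top}A(\Gamma)x$.

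Next I would discard the negative edges, which is exactly where the sign structure is absorbed. Writing $E^+$ for the set of positive edges of $\Gamma$ and letting $G^+$ be the (unsigned) graph on the $n$ vertices of $V(\Gamma)$ with edge set $E^+$, the hypothesis $x\ge 0$ gives
\[
\lambda_1(\Gamma)=2\sum_{uv\in E(\Gamma)}\sigma(uv)\,x_ux_v\le 2\sum_{uv\in E^+}x_ux_v=x^{\top}A(G^+)x\le \lambda_1(G^+),
\]
because every negative edge contributes a nonpositive term $-2x_ux_v$, and $x$ is a unit vector so the Rayleigh quotient of the symmetric matrix $A(G^+)$ is at most its largest eigenvalue $\lambda_1(G^+)$.

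It then remains to bound $\lambda_1(G^+)$. I would apply the classical Wilf bound $\lambda_1(H)\le n\bigl(1-\tfrac{1}{\omega(H)}\bigr)$ for an $n$-vertex graph $H$ to $H=G^+$, obtaining $\lambda_1(G^+)\le n\bigl(1-\tfrac{1}{\omega(G^+)}\bigr)$. Finally I would observe that any clique of $G^+$ is a set of vertices pairwise joined by positive edges of $\Gamma$, hence induces an all-positive, and therefore balanced, complete subgraph of $\Gamma$; consequently $\omega(G^+)\le \omega_b(\Gamma)$, so $1-\tfrac{1}{\omega(G^+)}\le 1-\tfrac{1}{\omega_b(\Gamma)}$. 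Chaining these inequalities yields $\lambda_1(\Gamma)\le n\bigl(1-\tfrac{1}{\omega_b(\Gamma)}\bigr)$, as required.

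The only genuine content beyond this bookkeeping is the Wilf inequality for $G^+$; if a self-contained argument is preferred, the main step would be the Motzkin--Straus simplex-optimization proof, showing that a maximizer of $x^{\top}A(G^+)x$ over $\{x\ge 0:\|x\|_1=1\}$ can be taken supported on a clique (shifting mass between two non-adjacent support vertices along $e_i-e_j$ never decreases the form), on which the value equals $1-\tfrac{1}{\omega(G^+)}$, combined with Cauchy--Schwarz in the form $\|x\|_1^2\le n\|x\|_2^2$ to pass from the $\ell_1$-normalization to the spectral radius. I would also be careful to verify the two switching-invariance claims (of the eigenvalues and of $\omega_b$) cleanly, as these legitimize the reduction to a nonnegative eigenvector performed at the outset.
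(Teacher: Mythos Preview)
The paper does not give its own proof of this lemma; it is simply quoted from \cite{WYQ}. Your argument is correct: after switching to obtain a nonnegative $\lambda_1$-eigenvector (Lemma~\ref{L+}), dropping the negative edges can only increase the Rayleigh quotient, reducing the problem to the unsigned bound $\lambda_1(G^+)\le n\bigl(1-\tfrac{1}{\omega(G^+)}\bigr)$, which follows from Motzkin--Straus together with Cauchy--Schwarz, and $\omega(G^+)\le\omega_b(\Gamma)$ is immediate since an all-positive clique is balanced. This is in fact the same route taken in \cite{WYQ}, so your proof is both correct and aligned with the cited source; the only point you might make more explicit is that the Motzkin--Straus/Cauchy--Schwarz step for $G^+$ does not require connectedness, since one can always replace a maximizing unit vector $y$ by $|y|\ge 0$ before normalizing in $\ell_1$.
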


Let $\Gamma$ be a signed graph on vertices $v_1,\dots, v_n$.  A real vector $\mathbf{x}=(x_1, \dots, x_n)^\top$ is viewed as a function on $\{v_1, \dots, v_n\}$ that maps vertex $v_i$  to $x_i$ , i.e.,
$\mathbf{x}(v_i)=x_i$, for $i=1,\dots, n$.

\begin{Lemma} \cite{BS} \label{FII}  Let $\mathbf{x}=(x_1, \dots, x_n)^\top$  be an eigenvector associated
with the largest eigenvalue of a signed graph $\Gamma$ and let $v_r, v_s$ be fixed vertices of $\Gamma$.
If $x_rx_s\ge 0$, at least one of $x_r,x_s$ is nonzero, and $v_r$ and $v_s$  is are not adjacent (resp. $v_rv_s$  is a negative edge), then for a signed
graph $\Gamma'$ obtained by adding a positive edge $v_rv_s$ (resp. removing  $v_rv_s$ or
reversing its sign) we have $\lambda_1(\Gamma')>\lambda_1(\Gamma)$.
\end{Lemma}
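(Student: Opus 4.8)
The plan is to prove the strict inequality from the variational (Rayleigh quotient) characterization of the largest eigenvalue, namely $\lambda_1(\Gamma')=\max_{\|\mathbf{y}\|=1}\mathbf{y}^\top A(\Gamma')\mathbf{y}$, together with a one-entry analysis of the perturbation. The first step is to observe that all three operations on $v_rv_s$ change $A(\Gamma)$ in exactly the same symmetric pair of entries: writing $\mathbf{e}_r,\mathbf{e}_s$ for the standard basis vectors, one has
\[
A(\Gamma')=A(\Gamma)+c\left(\mathbf{e}_r\mathbf{e}_s^\top+\mathbf{e}_s\mathbf{e}_r^\top\right),
\]
where $c=1$ when a positive edge $v_rv_s$ is added (entry $0\to 1$) or when the negative edge $v_rv_s$ is removed (entry $-1\to 0$), and $c=2$ when the negative edge is reversed in sign (entry $-1\to 1$). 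In every case $c>0$, and this positivity is the only structural feature I will need, so the three cases can be treated uniformly.

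The second step is the direct Rayleigh quotient computation. Normalizing $\mathbf{x}$ so that $\|\mathbf{x}\|=1$, one has $\mathbf{x}^\top A(\Gamma)\mathbf{x}=\lambda_1(\Gamma)$, and from the displayed form of the perturbation,
\[
\mathbf{x}^\top A(\Gamma')\mathbf{x}=\lambda_1(\Gamma)+2c\,x_rx_s\ge \lambda_1(\Gamma),
\]
where the inequality uses the hypothesis $x_rx_s\ge 0$ together with $c>0$. Substituting $\mathbf{x}$ into the maximum characterization of $\lambda_1(\Gamma')$ immediately gives the weak bound $\lambda_1(\Gamma')\ge \lambda_1(\Gamma)$.

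The main obstacle is upgrading this to a strict inequality, because the hypotheses allow $x_rx_s=0$ (one of the two entries may vanish), in which case the Rayleigh quotient does not strictly increase. I would resolve this by contradiction. Suppose $\lambda_1(\Gamma')=\lambda_1(\Gamma)=:\lambda$. Then the chain $\lambda=\lambda_1(\Gamma')\ge \mathbf{x}^\top A(\Gamma')\mathbf{x}\ge \lambda_1(\Gamma)=\lambda$ forces $\mathbf{x}^\top A(\Gamma')\mathbf{x}=\lambda$, so the unit vector $\mathbf{x}$ attains the maximum of the Rayleigh quotient of $A(\Gamma')$. Invoking the standard fact that the maximizers of $\mathbf{y}^\top M\mathbf{y}$ on the unit sphere are precisely the unit $\lambda_1(M)$-eigenvectors, we obtain $A(\Gamma')\mathbf{x}=\lambda\mathbf{x}$. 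Since also $A(\Gamma)\mathbf{x}=\lambda\mathbf{x}$, subtracting and using the displayed perturbation yields $c\left(\mathbf{e}_r\mathbf{e}_s^\top+\mathbf{e}_s\mathbf{e}_r^\top\right)\mathbf{x}=\mathbf{0}$; reading off the $r$-th and $s$-th coordinates gives $c\,x_s=0$ and $c\,x_r=0$, whence $x_r=x_s=0$ because $c>0$. This contradicts the hypothesis that at least one of $x_r,x_s$ is nonzero, so the assumed equality is impossible and $\lambda_1(\Gamma')>\lambda_1(\Gamma)$, which settles all three cases at once.
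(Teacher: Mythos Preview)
Your proof is correct. The paper does not prove this lemma at all: it is quoted from \cite{BS} (Brunetti--Stani\'c) and used as a black box, so there is no proof in the paper to compare against. Your argument---uniformly writing the perturbation as $A(\Gamma')=A(\Gamma)+c(\mathbf{e}_r\mathbf{e}_s^\top+\mathbf{e}_s\mathbf{e}_r^\top)$ with $c>0$, getting $\lambda_1(\Gamma')\ge\lambda_1(\Gamma)$ from Rayleigh, and then forcing $x_r=x_s=0$ from the eigenvector equations in the equality case---is the standard and clean way to establish this perturbation lemma, and it stands on its own.
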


\section{Proof of Theorem~\ref{N1}}

\begin{proof}[Proof of Theorem~\ref{N1}]
For a $\mathcal{C}_{3,4}^-$-free unbalanced signed graph of order $6$, the length of a shortest  negative cycle is $5$ or $6$. We display all  $\mathcal{C}_{3,4}^-$-free unbalanced signed graphs of order $6$ under switching equivalence in Fig.~\ref{f3}, where the largest and the smallest eigenvalues are listed below the corresponding signed graphs, and the last signed graph is just $\Gamma_{6,0}$. It is easy to see that
$\Gamma_{6,0}$ is the unique one with maximum spectral radius, which, by Lemma \ref{N2} (ii), is
 the largest root of the equation $\lambda^3-6\lambda-3=0$.
\begin{figure}[htbp]
\centering
\begin{tikzpicture}
\filldraw [black] (1,2) circle (2pt);
\filldraw [black] (2.5,2) circle (2pt);
\filldraw [black] (1,1) circle (2pt);
\filldraw [black] (2.5,1) circle (2pt);
\filldraw [black] (1.75,0.2) circle (2pt);
\filldraw [black] (1.75,1.1) circle (2pt);
\draw  [black](1,2)--(1,1);
\draw  [red](1,2)--(2.5,2);
\draw  [black](2.5,2)--(2.5,1);
\draw  [black](1,1)--(1.75,0.2);
\draw  [black](2.5,1)--(1.75,0.2);
\node at (1.75, 2.2) {$-$};
\node at (1.75, -0.3) {$1.6180$};
\node at (1.75, -0.8) {$-2$};

\filldraw [black] (-0.5,2) circle (2pt);
\filldraw [black] (-1.7,2) circle (2pt);
\filldraw [black] (0,1.1) circle (2pt);
\filldraw [black] (-2.2,1.1) circle (2pt);
\filldraw [black] (-0.5,0.2) circle (2pt);
\filldraw [black] (-1.7,0.2) circle (2pt);
\draw  [black](-0.5,2)--(0,1.1);
\draw  [red](-0.5,2)--(-1.7,2);
\draw  [black](-1.7,2)--(-2.2,1.1);
\draw  [black](0,1.1)--(-0.5,0.2);
\draw  [black](-1.7,0.2)--(-0.5,0.2);
\draw  [black](-1.7,0.2)--(-2.2,1.1);
\node at (-1.1, 2.2) {$-$};
\node at (-1.1, -0.3) {$1.7321$};
\node at (-1.1, -0.8) {$-1.7321$};

\filldraw [black] (3.5,2) circle (2pt);
\filldraw [black] (5,2) circle (2pt);
\filldraw [black] (3.5,1) circle (2pt);
\filldraw [black] (5,1) circle (2pt);
\filldraw [black] (4.25,0.2) circle (2pt);
\filldraw [black] (4.25,1.1) circle (2pt);
\draw  [black](3.5,2)--(3.5,1);
\draw  [red](3.5,2)--(5,2);
\draw  [black](5,2)--(5,1);
\draw  [black](3.5,1)--(4.25,0.2);
\draw  [black](5,1)--(4.25,0.2);
\draw  [black](4.25,1.1)--(3.5,1);
\node at (4.25, 2.2) {$-$};
\node at (4.25, -0.3) {$1.8608$};
\node at (4.25, -0.8) {$-2.1149$};

\filldraw [black] (6,2) circle (2pt);
\filldraw [black] (7.5,2) circle (2pt);
\filldraw [black] (6,1) circle (2pt);
\filldraw [black] (7.5,1) circle (2pt);
\filldraw [black] (6.75,0.2) circle (2pt);
\filldraw [black] (6.75,1.1) circle (2pt);
\draw  [black](6,2)--(6,1);
\draw  [red](6,2)--(7.5,2);
\draw  [black](7.5,2)--(7.5,1);
\draw  [black](6,1)--(6.75,0.2);
\draw  [black](7.5,1)--(6.75,0.2);
\draw  [black](6.75,1.1)--(6,1);
\draw  [black](6.75,1.1)--(6.75,0.2);
\node at (6.75, 2.2) {$-$};
\node at (6.75, -0.3) {$2.3028$};
\node at (6.75, -0.8) {$-2$};

\filldraw [black] (8.5,2) circle (2pt);
\filldraw [black] (10,2) circle (2pt);
\filldraw [black] (8.5,1) circle (2pt);
\filldraw [black] (10,1) circle (2pt);
\filldraw [black] (9.25,0.2) circle (2pt);
\filldraw [black] (9.25,1.1) circle (2pt);
\draw  [black](8.5,2)--(8.5,1);
\draw  [red](8.5,2)--(10,2);
\draw  [black](10,2)--(10,1);
\draw  [black](8.5,1)--(9.25,0.2);
\draw  [black](10,1)--(9.25,0.2);
\draw  [black](9.25,1.1)--(8.5,1);
\draw  [black](9.25,1.1)--(10,1);
\node at (9.25, 2.2) {$-$};
\node at (9.25, -0.3) {$2.1642$};
\node at (9.25, -0.8) {$-2.3914$};

\filldraw [black] (11,2) circle (2pt);
\filldraw [black] (12.5,2) circle (2pt);
\filldraw [black] (11,1) circle (2pt);
\filldraw [black] (12.5,1) circle (2pt);
\filldraw [black] (11.75,0.2) circle (2pt);
\filldraw [black] (11.75,1.1) circle (2pt);
\draw  [black](11,2)--(11,1);
\draw  [red](11,2)--(12.5,2);
\draw  [black](12.5,2)--(12.5,1);
\draw  [black](11,1)--(11.75,0.2);
\draw  [black](12.5,1)--(11.75,0.2);
\draw  [black](11.75,1.1)--(11,1);
\draw  [black](11.75,1.1)--(11.75,0.2);
\draw  [black](11.75,1.1)--(12.5,1);
\node at (11.75, 2.2) {$-$};
\node at (11.75, -0.3) {$2.6691$};
\node at (11.75, -0.8) {$-2.1451$};
\end{tikzpicture}
\caption{The $\mathcal{C}_{3,4}^-$-free unbalanced signed graphs of order $6$ together with the largest and the smallest eigenvalues.}
\label{f3}
\end{figure}
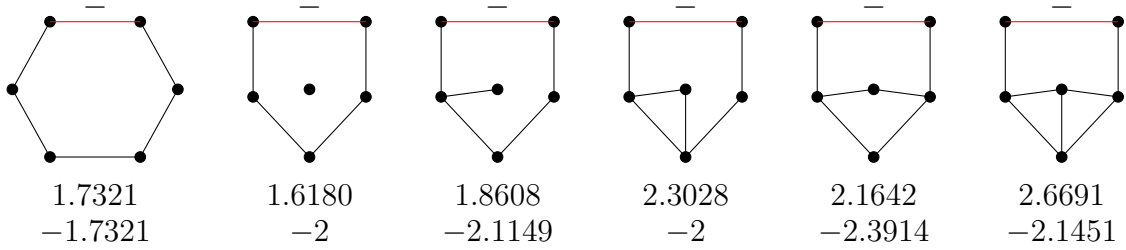

Suppose in the following that $n\ge 7$.
Let $\Gamma=(G,\sigma)$ be a $\mathcal{C}_{3,4}^-$-free  unbalanced signed graph with order $n$ that maximizes the spectral radius.
Since $\Gamma_{n,0}$ is  $\mathcal{C}_{3,4}^-$-free and unbalanced, we have $\rho(\Gamma)\ge\lambda_1(\Gamma_{n,0})>n-4$ by Lemma \ref{N2} (ii). By a direct calculation, we have
$\rho(\Gamma_{7,0})=3.7136>3.7$.

\begin{Claim}\label{C1}
$\rho(\Gamma)=\lambda_1(\Gamma)$.
\end{Claim}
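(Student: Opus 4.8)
The plan is to show that the largest eigenvalue dominates the negative of the smallest eigenvalue, i.e. $\lambda_1(\Gamma)\ge -\lambda_n(\Gamma)$, so that $\rho(\Gamma)=\max\{\lambda_1(\Gamma),-\lambda_n(\Gamma)\}=\lambda_1(\Gamma)$. Suppose for contradiction that $\rho(\Gamma)=-\lambda_n(\Gamma)>\lambda_1(\Gamma)$. The key point is that $-\lambda_n(\Gamma)=\lambda_1(-\Gamma)$, where $-\Gamma$ is the signed graph obtained by reversing all signs. Since $\Gamma$ maximizes the spectral radius over $\mathcal{C}_{3,4}^-$-free unbalanced signed graphs, and $\rho(\Gamma)>n-4$ by Lemma~\ref{N2}(ii), we have $\lambda_1(-\Gamma)>n-4$ as well. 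The idea is now to extract structural information from this large eigenvalue of $-\Gamma$: a signed graph on $n$ vertices with $\lambda_1>n-4$ must contain a large balanced clique. Concretely, by Lemma~\ref{wb}, $\lambda_1(-\Gamma)\le n(1-1/\omega_b(-\Gamma))$, so $n-4<n(1-1/\omega_b(-\Gamma))$ forces $\omega_b(-\Gamma)>n/4$, and in fact a sharper comparison with $\lambda_1(\Gamma_{n,0})$ (which is just below $n-3$) should push $\omega_b(-\Gamma)\ge n-O(1)$; I would aim to show $\omega_b(-\Gamma)\ge n-3$ or so.

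The next step is to derive a contradiction with the $\mathcal{C}_{3,4}^-$-freeness of $\Gamma$. Note that a balanced clique of order $k$ in $-\Gamma$ corresponds, after switching, to a vertex set $W$ in $\Gamma$ inducing a complete graph all of whose edges are \emph{negative}; any triangle inside $W$ is then a negative $C_3$ in $\Gamma$. So if $|W|\ge 3$, $\Gamma$ contains a negative triangle, contradicting $\mathcal{C}_{3,4}^-$-freeness. Hence $\omega_b(-\Gamma)\le 2$, and Lemma~\ref{wb} gives $\lambda_1(-\Gamma)\le n/2$. For $n\ge 7$ this is compatible with $\lambda_1(-\Gamma)>n-4$ only when $n\le 8$, so the argument as stated handles $n\ge 9$ immediately, and I would treat $n=7,8$ by a slightly more careful estimate — e.g. using the fact that an all-negative complete subgraph $\Gamma[W]$ with $|W|=2$ is just a single negative edge, and bounding $\lambda_1(-\Gamma)$ via the structure of $-\Gamma$ more tightly, or invoking Lemma~\ref{FII} to contradict maximality. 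Actually the cleaner route for all $n\ge 7$: since $\omega_b(\Gamma)\ge\omega_b(-\Gamma)$ is false in general, instead observe directly that $\Gamma$ itself, being $\mathcal{C}_3^-$-free, has the property that every clique in $\Gamma$ spans a balanced subgraph (a clique with no negative triangle, where moreover no $C_4$ is negative, is balanced), so $\omega_b(\Gamma)=\omega(\Gamma)$; combined with $\lambda_1(\Gamma)\le n(1-1/\omega_b(\Gamma))$ this will not directly help, so the $-\Gamma$ route above is the one to pursue.

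The main obstacle I anticipate is making the ``$\lambda_1>n-4$ forces a huge balanced clique'' step quantitatively strong enough to reach a contradiction for the small cases $n=7,8$, since Lemma~\ref{wb} alone only gives $\omega_b(-\Gamma)>n/4$, whereas the negative-triangle obstruction needs $\omega_b(-\Gamma)\le 2$. For $n\ge 9$ these are already incompatible and we are done. For $n\in\{7,8\}$ I expect one needs either an ad hoc spectral estimate on $-\Gamma$ (bounding $\lambda_1(-\Gamma)$ by, say, the maximum degree or by a direct eigenvalue-interlacing argument using that $-\Gamma$ has no large all-positive clique), or a direct appeal to the explicit computation $\rho(\Gamma_{7,0})=3.7136$ together with the observation that any $n=7$ candidate with $-\lambda_7>3.7$ would have to be very dense with many negative edges, which one rules out by hand. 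Once the contradiction is in place for all $n\ge 7$, the claim $\rho(\Gamma)=\lambda_1(\Gamma)$ follows.
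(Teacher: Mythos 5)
Your proposal lands on exactly the paper's argument: reverse all signs, observe that a balanced triangle of $-\Gamma$ is a negative triangle of $\Gamma$ so that $\mathcal{C}_3^-$-freeness forces $\omega_b(-\Gamma)\le 2$, and then apply Lemma~\ref{wb} to get $\rho(\Gamma)=-\lambda_n(\Gamma)=\lambda_1(-\Gamma)\le n/2$, contradicting $\rho(\Gamma)>n-4$. The one place you stop short is the small cases, and there is in fact nothing left to do there. For $n=8$ there is no gap at all: Lemma~\ref{N2}(ii) gives the \emph{strict} inequality $\rho(\Gamma)>n-4=4$, while the clique bound gives $\lambda_1(-\Gamma)\le n/2=4$, so the contradiction is immediate. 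For $n=7$ you already name the needed ingredient, $\rho(\Gamma)\ge\rho(\Gamma_{7,0})=3.7136$; since $n/2=3.5<3.7136$, this alone finishes the case --- no density argument, interlacing, or ``ruling out by hand'' is required. Two further remarks: the first paragraph's attempt to force $\omega_b(-\Gamma)\ge n-3$ from $\lambda_1(-\Gamma)>n-4$ points in the wrong direction (the whole point is that $\omega_b(-\Gamma)$ is \emph{small}, making a large $\lambda_1(-\Gamma)$ impossible), and you rightly abandon it; and the parenthetical claim that a clique with no negative triangle and no negative $C_4$ is balanced is true but irrelevant here, as you also note. With the $n=7,8$ observations above inserted, your proof is complete and coincides with the paper's.
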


\begin{proof}
Suppose that $\rho(\Gamma)\ne \lambda_1(\Gamma)$.
Since $\rho(\Gamma)=\max\{\lambda_1(\Gamma), -\lambda_n(\Gamma)\}$, we have
$\lambda_1(\Gamma)< -\lambda_n(\Gamma)=\rho(\Gamma)$.
As $\Gamma$ is $\mathcal{C}_3^-$-free, we have $\omega_b(-\Gamma)\le 2$. By Lemma \ref{wb}, we have
\[
\rho(\Gamma)=-\lambda_n(\Gamma)=\lambda_1(-\Gamma)\le n\left(1-\frac{1}{\omega_b(-\Gamma)}\right)\le \frac{n}{2}\le \begin{cases}
n-4 & \mbox{if } n\ge 8,\\
3.7 & \mbox{if } n=7,
\end{cases}
\]
a contradiction.
\end{proof}

By Lemma \ref{L+}, $\Gamma$ is switching equivalent to a signed graph
$\Gamma^*=(G,\sigma^*)$ such that $A(\Gamma^*)$ has a non-negative unit eigenvector $\mathbf{x}=(x_1,  \dots, x_n)^\top$ associated with $\lambda_1(\Gamma^*)$.
By Lemma \ref{SE}, $\Gamma^*$ is a $\mathcal{C}_{3,4}^-$-free  unbalanced signed graph of order $n$.
By Claim \ref{C1}, we have $\rho(\Gamma^*)=\lambda_1(\Gamma^*)=\lambda_1(\Gamma)=\rho(\Gamma)>n-4$.

\begin{Claim}\label{C2}
$\mathbf{x}$ contains at most two zero entries.
\end{Claim}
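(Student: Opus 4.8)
The plan is to show that an eigenvector $\mathbf{x}$ associated with $\lambda_1(\Gamma^*) > n-4$ cannot have three or more zero entries, by using the crucial fact that $\lambda_1(\Gamma^*)$ is large (bigger than $n-4$) together with the $\mathcal{C}_{3,4}^-$-free property. First I would recall the eigenvalue equation: for each vertex $v_i$, $\lambda_1(\Gamma^*)\, x_i = \sum_{v_j \sim v_i} \sigma^*(v_iv_j)\, x_j$. If $Z = \{v_i : x_i = 0\}$ has $|Z| \ge 3$, then for every $v_i \in Z$ we get $\sum_{v_j \sim v_i} \sigma^*(v_iv_j)\, x_j = 0$. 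The support $S = V(\Gamma^*)\setminus Z$ carries the ``real'' eigenvector, and the restriction of $\mathbf{x}$ to $S$ is a positive eigenvector of the principal submatrix $A(\Gamma^*)[S]$ for eigenvalue $\lambda_1(\Gamma^*)$; hence $\lambda_1(\Gamma^*[S]) = \lambda_1(\Gamma^*) > n-4 \ge |S| - 1$.

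The key step is then to bound $\lambda_1(\Gamma^*[S])$ from above using Lemma~\ref{wb}. Since $\Gamma^*$ (hence $\Gamma^*[S]$) is $\mathcal{C}_3^-$-free, the balanced clique number satisfies $\omega_b(\Gamma^*[S]) \le \omega_b(\Gamma^*)$; but more usefully, a balanced complete signed subgraph on $k$ vertices contains no negative triangle, which is automatic — what we actually need is an upper bound strictly below $|S|$. Here I would argue that $\Gamma^*[S]$ cannot be (switching equivalent to) a balanced complete graph on all of $S$: if it were, then together with a vertex of $Z$ one could reconstruct negative $C_3$ or $C_4$, or else $\Gamma^*$ would be balanced. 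More carefully: if $\omega_b(\Gamma^*[S]) = |S| =: m$, then $\Gamma^*[S]$ is switching equivalent to $(K_m, +)$ and $\lambda_1(\Gamma^*[S]) = m - 1$. Combined with $\lambda_1(\Gamma^*[S]) > n - 4$ and $m = n - |Z| \le n - 3$, this gives $n - 4 < m - 1 \le n - 4$, a contradiction. So $\omega_b(\Gamma^*[S]) \le m - 1$, and Lemma~\ref{wb} yields $\lambda_1(\Gamma^*[S]) \le m(1 - \tfrac{1}{m-1}) = m - 1 - \tfrac{1}{m-1} < m - 1 \le n - 4$, contradicting $\lambda_1(\Gamma^*[S]) > n - 4$.

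I expect the main obstacle to be handling the case $\omega_b(\Gamma^*[S]) = m - 1$ or smaller cleanly — specifically making sure the chain of inequalities $m(1 - \tfrac{1}{\omega_b}) \le n - 4$ really does follow, since $m$ could be as small as $n - 3$ and $\omega_b$ could be $m-1 = n-4$, giving $\lambda_1 \le (n-3)\cdot\frac{n-5}{n-4}$, which for large $n$ is slightly below $n-4$; one must verify $(n-3)(n-5) < (n-4)^2$, i.e. $n^2 - 8n + 15 < n^2 - 8n + 16$, which is $15 < 16$, true. Smaller values of $\omega_b$ only make the bound smaller, so the worst case is $\omega_b = m - 1$ with $m$ as large as possible, i.e. $m = n-3$; this is exactly the computation above. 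The only subtlety is the edge case $n = 7$ where $n - 4 = 3$ and one should double-check that the strict inequality $\lambda_1(\Gamma^*) > n - 4$ together with integrality is not needed — but since $\rho(\Gamma^*) > n - 4$ was already established strictly, the argument goes through uniformly for all $n \ge 7$. Thus $|Z| \le 2$, proving the claim.
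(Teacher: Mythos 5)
Your proof is correct, but it takes a genuinely different route from the paper's. The paper simply deletes the three zero-coordinate vertices: writing $\mathbf{y}$ for $\mathbf{x}$ with those entries removed, Rayleigh's principle gives $\lambda_1(\Gamma^*)=\mathbf{x}^\top A(\Gamma^*)\mathbf{x}=\mathbf{y}^\top A(\Gamma^*-\{v_i,v_j,v_k\})\mathbf{y}\le\lambda_1(\Gamma^*-\{v_i,v_j,v_k\})$, and then invokes the comparison $\lambda_1(\Gamma')\le\lambda_1(G')$ between a signed graph and its underlying graph (quoted from \cite{St1} in the introduction) together with $\lambda_1(G')\le\lambda_1(K_{n-3})=n-4$ to contradict $\lambda_1(\Gamma^*)>n-4$. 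You instead restrict $\mathbf{x}$ to its support $S$, correctly observe that the restriction is an eigenvector of $A(\Gamma^*[S])$ for the eigenvalue $\lambda_1(\Gamma^*)$, and then bound $\lambda_1(\Gamma^*[S])$ from above via the balanced-clique-number inequality of Lemma~\ref{wb}, splitting into the cases $\omega_b(\Gamma^*[S])=|S|$ (where $\Gamma^*[S]$ is a balanced complete signed graph, so $\lambda_1(\Gamma^*[S])=|S|-1\le n-4$) and $\omega_b(\Gamma^*[S])\le|S|-1$ (where the lemma gives $\lambda_1(\Gamma^*[S])<|S|-1\le n-4$); both contradict $\lambda_1(\Gamma^*[S])>n-4$. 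Both arguments are sound and of comparable length: the paper's is slightly more economical because the single chain $\lambda_1(\Gamma')\le\lambda_1(G')\le n-4$ replaces your case split, while yours avoids the signed-versus-underlying comparison and reuses only Lemma~\ref{wb}, which the paper already employs in Claim~\ref{C1}. The speculative aside in your second paragraph about reconstructing a negative $C_3$ or $C_4$ from a vertex of $Z$ is unnecessary and should be dropped, since your clean count $n-4<|S|-1\le n-4$ already disposes of the case $\omega_b(\Gamma^*[S])=|S|$; note also that neither the $\mathcal{C}_3^-$-freeness nor the strictness discussion for $n=7$ is actually needed anywhere in your argument.
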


\begin{proof}
Suppose that this is not true. Assume that $x_i=x_j=x_k=0$ with $1\le i<j<k\le n$. Let $\mathbf{y}$ be the vector obtained from $\mathbf{x}$ by removing the entries $x_i, x_j$ and $x_k$.
It is well known that among graphs with fixed order, the spectral radius is uniquely maximized by the complete graph.
By Rayleigh's principle,
\begin{align*}
\lambda_1(\Gamma^*)&=\mathbf{x}^\top A(\Gamma^*)\mathbf{x}=
\mathbf{y}^\top A(\Gamma^*-\{v_i, v_j, v_k\})\mathbf{y}\\
&\le\lambda_1(\Gamma^*-\{v_i, v_j, v_k\})\le \lambda_1(G-\{v_i, v_j, v_k\})\le
\lambda_1(K_{n-3})=n-4,
\end{align*}
a contradiction.
\end{proof}

\begin{Claim}\label{C3}
$\Gamma^*$ is connected.
\end{Claim}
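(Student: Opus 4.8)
$\Gamma^*$ is connected.

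The plan is to argue by contradiction using the fact that $\rho(\Gamma^*) = \lambda_1(\Gamma^*) > n-4$, together with the $\mathcal{C}_{3,4}^-$-free and unbalanced hypotheses. Suppose $\Gamma^*$ is disconnected. Since switching preserves connectedness, $\Gamma^*$ has the same components as $\Gamma$; because $\Gamma^*$ is unbalanced, at least one component, say $\Gamma_1$, is unbalanced, and since every component is still $\mathcal{C}_{3,4}^-$-free, that component has at least $5$ vertices. Write $n_1 = |V(\Gamma_1)|$, so $5 \le n_1 \le n-1$ (the remaining components together have at least one vertex).

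The key step is to bound $\lambda_1(\Gamma^*) = \max_i \lambda_1(\Gamma_i)$ and derive a contradiction with $\lambda_1(\Gamma^*) > n-4$. For any component $\Gamma_i$ with underlying graph $G_i$ of order $m_i$, we have $\lambda_1(\Gamma_i) \le \lambda_1(G_i) \le \lambda_1(K_{m_i}) = m_i - 1$ by \cite{St1} and the extremality of $K_n$ cited in the proof of Claim~\ref{C2}. If \emph{every} component has order at most $n-4$, then $\lambda_1(\Gamma^*) \le (n-4) - 1 = n-5 < n-4$, a contradiction. So some component $\Gamma_1$ has order $n_1 \in \{n-3, n-2, n-1\}$. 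If $\Gamma_1$ is balanced, then $\lambda_1(\Gamma_1) \le \lambda_1(K_{n_1}) = n_1 - 1 \le n-2$; but then the other components carry the unbalance, have order at most $n - n_1 \le 3 < 5$, which is impossible for an unbalanced $\mathcal{C}_{3,4}^-$-free signed graph. Hence $\Gamma_1$ itself is the unbalanced component, with $n_1 \ge n-3 \ge 4$, and in fact $n_1 \ge 5$ as noted; the remaining components are balanced and total $n - n_1 \le 3$ vertices, hence contribute largest eigenvalue at most $n - n_1 - 1 \le 2$, which is $< n-4$ for $n \ge 7$. Therefore $\lambda_1(\Gamma^*) = \lambda_1(\Gamma_1)$, and it suffices to show $\lambda_1(\Gamma_1) \le n-4$.

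To finish, I would apply Lemma~\ref{wb} to the unbalanced component $\Gamma_1$: since $\Gamma_1$ is $\mathcal{C}_3^-$-free, a balanced complete subgraph cannot contain a negative triangle, but more to the point $\Gamma_1$ is unbalanced so $\Gamma_1$ itself is not a balanced complete graph; I expect the sharper route is to use that $\Gamma_1$ has order $n_1 \le n-1$ with $\omega_b(\Gamma_1) \le n_1 - 1$ (it cannot be a balanced $K_{n_1}$, being unbalanced — or if $G_1$ is complete, unbalancedness forces a negative triangle or negative $C_4$, contradicting $\mathcal{C}_{3,4}^-$-freeness, so actually $\omega_b(\Gamma_1) \le n_1 - 1$), giving $\lambda_1(\Gamma_1) \le n_1(1 - \frac{1}{n_1 - 1}) = n_1 - 1 - \frac{n_1}{n_1-1} < n_1 - 1 \le n - 2$. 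This still isn't strong enough by itself, so the cleanest argument is: the unbalanced $\mathcal{C}_{3,4}^-$-free component $\Gamma_1$ of order $n_1$ satisfies $\lambda_1(\Gamma_1) \le \gamma_{n_1}$ if we already had the theorem for smaller orders — but since we are mid-proof, instead I would note directly that $G_1 \ne K_{n_1}$ (a complete graph made unbalanced must contain a negative $C_3$), so $G_1$ is a proper subgraph of $K_{n_1}$ and $\lambda_1(\Gamma_1) \le \lambda_1(G_1) \le \lambda_1(K_{n_1} - e)$. The main obstacle is pinning down this last inequality cleanly: $\lambda_1(K_{n-1} - e) < n-4$ must be verified, and since $\lambda_1(K_{n-1}-e)$ is a root of an explicit cubic-type equation slightly below $n-2$, one checks $\lambda_1(K_{n-1}-e) = \frac{n-3 + \sqrt{(n-3)^2 + 4(n-2)}}{2} < n-2$, which for $n \ge 7$ is still $\ge n-4$ only marginally — so in fact I expect the intended argument is the component-order bound combined with Lemma~\ref{wb} applied to $-\Gamma_1$ or a direct use of $\omega_b$, forcing $\lambda_1(\Gamma_1) \le \frac{n-1}{2}\cdot\frac{n_1}{\,\cdot\,}$; I would reconcile these and present whichever clean inequality yields $\lambda_1(\Gamma^*) \le n-4$, contradicting $\lambda_1(\Gamma^*) > n-4$ and proving connectedness.
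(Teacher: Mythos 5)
There is a genuine gap: your argument never actually reaches the contradiction $\lambda_1(\Gamma^*)\le n-4$, and the bounds you do establish cannot be pushed that far. The best you obtain for the unbalanced component $\Gamma_1$ of order $n_1\le n-1$ is $\lambda_1(\Gamma_1)\le\lambda_1(K_{n_1}-e)<n_1-1\le n-2$, which misses the target by two units, and the route through Lemma~\ref{wb} gives only about $n_1-1-\frac{n_1}{n_1-1}$, still roughly $n-3$. The only way to make the component-wise bounding strategy close is to invoke the theorem itself for the smaller order $n_1$ (i.e.\ recast the whole proof as an induction on $n$, using that $\gamma_{n_1}\le\gamma_{n-1}<n-4$ because $g(m-3)=2m-6>0$), which you mention but correctly note is unavailable mid-proof; the paper's argument is not inductive. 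As written, your proof ends by promising to ``present whichever clean inequality yields $\lambda_1(\Gamma^*)\le n-4$'' --- but that inequality is precisely what is missing, so the contradiction is never derived.

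The idea the paper uses instead is purely structural and avoids all eigenvalue estimates on components: if $\Gamma^*$ is disconnected, add a single positive edge between a vertex of one component and a vertex of another, choosing the endpoints (via Claim~\ref{C2}, which guarantees at most two zero entries of $\mathbf{x}$) so that at least one endpoint carries a nonzero eigenvector entry. An edge joining two distinct components lies on no cycle of the new graph, so no negative $C_3$ or $C_4$ is created and the graph remains unbalanced; Lemma~\ref{FII} then gives $\lambda_1(\Gamma')>\lambda_1(\Gamma^*)$, contradicting the extremality of $\Gamma^*$. Your preliminary observations --- that an unbalanced $\mathcal{C}_{3,4}^-$-free component must have order at least $5$, and that an unbalanced signed complete graph contains a negative triangle --- are correct, but they do not substitute for this edge-addition step.
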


\begin{proof}
Suppose that $\Gamma^*$ is not connected. Note that $\Gamma^*$ contains at least one negative cycle.  Assume that $v_1v_2$ is an edge in one component with $x_1\ge x_2$ and $v_3$ is a vertex in another component. By Claim \ref{C2}, we have $x_3>0$ or  $x_1>0$.
Let $\Gamma'$ be the signed graph formed by adding a positive edge $v_1v_3$ to $\Gamma^*$.
Evidently, $\Gamma'$ is a $\mathcal{C}_{3,4}^-$-free unbalanced signed graph of order $n$.
By Lemma \ref{FII}, we have $\lambda_1(\Gamma')>\lambda_1(\Gamma^*)$, a contradiction.
\end{proof}

Since $\Gamma^*$ is unbalanced, $\Gamma^*$ contains at least one negative cycle. Let $C:=v_1  \dots  v_rv_1$ be a shortest negative cycle in $\Gamma^*$.  Then $r\ge 5$.

\begin{Claim}\label{zh}
$C$ is an induced cycle.
\end{Claim}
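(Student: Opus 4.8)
The plan is to show that $C$, a shortest negative cycle in $\Gamma^*$, has no chord. Suppose for contradiction that $v_iv_j$ is a chord, where $1\le i<j\le r$ and $v_i,v_j$ are non-consecutive on $C$. The chord splits $C$ into two shorter cycles: $C_1 = v_iv_{i+1}\dots v_jv_i$ of length $j-i+1$, and $C_2 = v_jv_{j+1}\dots v_rv_1\dots v_iv_j$ of length $r-(j-i)+1$. Both $C_1$ and $C_2$ have length strictly between $2$ and $r$, hence length in $\{3,4,\dots,r-1\}$, and together they use all edges of $C$ plus the chord $v_iv_j$ twice (once in each).

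The key observation is that the product of the signs of $C_1$ and $C_2$ equals $\sigma(C)\cdot\sigma(v_iv_j)^2 = \sigma(C) = -1$, since the chord contributes to each of $C_1,C_2$ exactly once and all other edges of $C$ appear in exactly one of them. Therefore exactly one of $C_1, C_2$ is negative. But each of $C_1, C_2$ has length at most $r-1$, contradicting the minimality of $r$ as the length of a shortest negative cycle — unless the negative one among them has length $3$ or $4$, in which case it is a negative $C_3$ or $C_4$, contradicting the $\mathcal{C}_{3,4}^-$-freeness of $\Gamma^*$. Either way we reach a contradiction, so $C$ must be induced.

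The main point requiring a little care is the sign-product identity: one should check that every edge of $C$ other than the chord lies in exactly one of $C_1,C_2$, while the chord $v_iv_j$ lies in both. This is immediate from the way a chord partitions the vertex set of a cycle into two arcs. Given $r\ge 5$ (established just before this claim), both pieces genuinely have length at least $3$, so the case analysis above is exhaustive: the negative piece has length in $\{3,4\}$ (forbidden by hypothesis) or in $\{5,\dots,r-1\}$ (contradicting minimality of $r$). There is no computational obstacle here; the only thing to state cleanly is that a chord always yields two strictly shorter cycles whose signs multiply to that of the original cycle.
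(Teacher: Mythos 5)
Your argument is correct and is essentially the paper's own proof: the paper likewise splits $C$ at the chord into two paths, notes that exactly one of the two resulting shorter cycles is negative (phrased there via the parity of negative edges on each path rather than your sign-product identity), and derives a contradiction with the minimality of $C$. Your extra appeal to $\mathcal{C}_{3,4}^-$-freeness for the length-$3$ and length-$4$ cases is harmless but unnecessary, since any shorter negative cycle already contradicts the choice of $C$ as a shortest negative cycle.
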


\begin{proof} Suppose that $C$ is not an induced cycle. Then there is a chord $v_sv_t$ of $C$ with $1\le s<t\le r$.  As $C$ is negative,
$v_s$ and $v_t$ separate $C$ into two paths connecting $v_s$ and $v_t$,
 one path $P$ with an even number of negative edges, and the other path $Q$ with an odd number of negative edges. If $v_sv_t$ is an negative edge, then $P$ and $v_sv_t$ form a negative cycle that is shorter than $C$, otherwise, $Q$ and $v_sv_t$ form a negative cycle that is shorter than $C$, which is a contradiction in either case.
\end{proof}

\begin{Claim} \label{z2}
If $v_sv_t$ is  a negative edge of $\Gamma^*$ and it is not an edge of $C$, then $x_s=x_t=0$.
\end{Claim}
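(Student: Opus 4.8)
The plan is to argue by contradiction, exploiting the maximality of $\Gamma^*$ together with Lemma~\ref{FII}. Suppose the conclusion fails, so that at least one of $x_s, x_t$ is nonzero. Since $\mathbf{x}$ is a non-negative eigenvector, this gives $x_sx_t \ge 0$ with at least one of $x_s, x_t$ nonzero, which is precisely the hypothesis needed to apply Lemma~\ref{FII} in the ``negative edge'' case. Hence, if $\Gamma'$ denotes the signed graph obtained from $\Gamma^*$ by removing the (negative) edge $v_sv_t$, then $\lambda_1(\Gamma') > \lambda_1(\Gamma^*)$.

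Next I would verify that $\Gamma'$ is still admissible for the extremal problem, i.e. that it is a $\mathcal{C}_{3,4}^-$-free unbalanced signed graph of order $n$. Deleting an edge cannot create any new cycle, so $\Gamma'$ remains $\mathcal{C}_{3,4}^-$-free. For unbalancedness, the key point is that $v_sv_t$ is \emph{not} an edge of $C$; therefore the negative cycle $C$ survives intact in $\Gamma'$, and $\Gamma'$ is still unbalanced. Thus $\Gamma'$ lies in the same family over which $\Gamma$ was chosen to maximize the spectral radius.

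Finally, combining the two observations, $\rho(\Gamma') \ge \lambda_1(\Gamma') > \lambda_1(\Gamma^*) = \rho(\Gamma)$, contradicting the choice of $\Gamma$. Hence $x_s = x_t = 0$. The argument is short, and the only points requiring care are that all hypotheses of Lemma~\ref{FII} are genuinely met --- the sign condition $x_sx_t \ge 0$ being automatic from the non-negativity of $\mathbf{x}$, and ``at least one nonzero'' being exactly the negation of the claim --- together with the elementary but essential remark that a negative cycle cannot be destroyed by deleting an edge lying outside it, which is what keeps $\Gamma'$ unbalanced. I do not expect a substantive obstacle here; none of the earlier structural claims (connectedness, at most two zero entries) are needed for this step.
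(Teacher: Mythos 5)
Your argument is correct and is essentially identical to the paper's proof: delete the negative edge $v_sv_t$, note that $C$ survives so the resulting signed graph is still $\mathcal{C}_{3,4}^-$-free and unbalanced, and invoke Lemma~\ref{FII} to contradict the maximality of $\rho(\Gamma)$. Nothing further is needed.
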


\begin{proof}
Construct a signed graph $\Gamma'$ by removing the edge $v_sv_t$ from $\Gamma^*$. As $C$ is a negative cycle in $\Gamma'$,  $\Gamma'$ is   a $\mathcal{C}_{3,4}^-$-free unbalanced signed graph. If  $x_s\ne 0$ or $x_t\ne 0$, then we have by Lemma \ref{FII} that $\lambda_1(\Gamma')>\lambda_1(\Gamma^*)$, a contradiction.
\end{proof}

By Claims \ref{C2} and \ref{z2}, there is at most one negative edge outside $C$.

\begin{Claim} \label{a1}
Any negative edge on $C$ does not lie on a positive $C_3$.
\end{Claim}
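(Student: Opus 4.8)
The plan is to argue by contradiction. Suppose some negative edge $e=v_pv_q$ of $C$ lies on a positive triangle $v_pv_qv_pv_q\ldots$ — that is, there is a vertex $z\notin\{v_p,v_q\}$ adjacent to both $v_p$ and $v_q$ with the triangle $v_pv_qz$ balanced. Since $e=v_pv_q$ is negative, balancedness of the triangle $v_pzv_q$ forces exactly one of $v_pz$, $v_qz$ to be negative; say $v_qz$ is negative and $v_pz$ is positive (the other case is symmetric).

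First I would observe that $z$ cannot be a vertex of $C$: if $z=v_j$ for some $j$, then $zv_p$ and $zv_q$ would be chords (or edges) of $C$, but $C$ is induced by Claim~\ref{zh}, so the only way this happens is if $z$ is one of the two neighbours of $\{v_p,v_q\}$ on $C$, in which case $zv_p$ or $zv_q$ is a genuine edge of $C$; combined with $e$ this produces a negative triangle contained in $C\cup\{e\}$ or a short negative cycle, contradicting that $C$ is a shortest negative cycle of length $r\ge 5$. Hence $z\notin V(C)$, and in particular the negative edge $v_qz$ is a negative edge \emph{not} lying on $C$. By Claim~\ref{z2} this gives $x_q=x_z=0$. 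Now I use the positive triangle again from the other side: $v_pv_qz$ is a positive (balanced) triangle, so it is switching equivalent to an all-positive triangle; in particular I can also view $v_pz$ as playing the role of the ``extra'' edge. More directly, since $x_q=0$ and $e=v_pv_q$ is a negative edge of $C$, I will derive a contradiction by a local switching/eigenvector argument: form $\Gamma'$ from $\Gamma^*$ by deleting the edge $v_qz$ (already done, giving $x_q=x_z=0$) and then examine the edge $v_pz$, which is now a positive edge with $x_z=0$; deleting it cannot decrease $\lambda_1$, and re-adding a positive edge $v_qz'$ for a suitable $z'$ with $x_{z'}>0$ and $x_q=0$ strictly increases $\lambda_1$ by Lemma~\ref{FII}, contradicting maximality of $\rho(\Gamma)=\lambda_1(\Gamma^*)$. (Here I use Claim~\ref{C2}, which guarantees the existence of vertices with positive $\mathbf{x}$-value to serve as endpoints, so that such a modification is available while remaining $\mathcal{C}_{3,4}^-$-free and unbalanced because $C$ is untouched.)

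The cleanest route, and the one I would actually write, is: from $x_q=0$ we get that the defining eigenvalue equation at $v_q$ reads $\lambda_1(\Gamma^*)\,x_q=\sum_{w\sim v_q}\sigma^*(v_qw)x_w=0$; since $v_q$ has degree $2$ on $C$ with neighbours, say, $v_p$ and $v_{q'}$, and possibly other neighbours off $C$, and since $e=v_pv_q$ is negative while (as just shown) any negative edge at $v_q$ outside $C$ forces its other endpoint to have $\mathbf{x}$-value $0$, I can read off a linear relation among small-index entries that is incompatible with $\lambda_1(\Gamma^*)>n-4$ together with Claim~\ref{C2}. The main obstacle I anticipate is bookkeeping the sign of the triangle edges and ruling out the degenerate overlaps of $z$ with the two $C$-neighbours of $v_p,v_q$: in those degenerate cases one must check directly that a negative cycle of length $\le 4$ or $\le r-1$ appears, contradicting either $\mathcal{C}_{3,4}^-$-freeness or the minimality of $C$. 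Once $z\notin V(C)$ is secured, the argument is a routine application of Claims~\ref{C2} and~\ref{z2} plus Lemma~\ref{FII}.
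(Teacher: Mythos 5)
Your setup is correct and matches the first half of the paper's argument: since the triangle is positive and $v_pv_q$ is negative, exactly one of $v_pz$, $v_qz$ is negative; $z\notin V(C)$ because $C$ is induced of length $r\ge 5$; and the negative triangle edge, say $v_qz$, is a negative edge off $C$, so Claim~\ref{z2} gives $x_q=x_z=0$. But you never deliver the contradiction. The natural (and the paper's) finish is short: by Claim~\ref{C2}, $x_p>0$; delete the negative edge $v_pv_q$ itself. The resulting graph is still $\mathcal{C}_{3,4}^-$-free (edge deletion creates no new cycles) and still unbalanced, because replacing $v_pv_q$ in $C$ by the path $v_pzv_q$ (one negative edge) yields a negative cycle that survives the deletion. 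Since $x_px_q=0$ with $x_p\ne 0$, Lemma~\ref{FII} gives a strict increase of $\lambda_1$, contradicting maximality.

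Both of the finishes you actually propose have genuine gaps. The first one is not well defined: Claim~\ref{z2} does not delete the edge $v_qz$, it only concludes the entries vanish, so your $\Gamma'$ is not the graph you think it is; more seriously, ``re-adding a positive edge $v_qz'$ for a suitable $z'$'' requires verifying that the addition creates no negative $C_3$ or $C_4$, and this can fail (for instance, if $z'$ is adjacent to $v_p$, then $v_pv_qz'v_p$ becomes a negative triangle through the negative edge $v_pv_q$). You assert availability of such a $z'$ without argument, and Claim~\ref{C2} only controls zero entries of $\mathbf{x}$, not adjacencies. The second ``cleanest route'' via the eigenvalue equation at $v_q$ is not an argument at all: you do not exhibit the linear relation, nor explain how it conflicts with $\lambda_1(\Gamma^*)>n-4$. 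As written, the proof does not close.
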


\begin{proof}
Suppose that a positive $C_3$ contains a negative edge, say $e=v_1v_2$, on $C$. Let $v_s$ be the vertex of the positive $C_3$ different from $v_1$ and $v_2$. By  Claim \ref{zh},  $v_s\notin V(C)$.
Note that $v_1v_s$ and $v_2v_s$ have opposite signs, say $v_1v_s$ is negative.
By Claims  \ref{z2} and \ref{C2}, $x_1=x_s=0$ and $x_2>0$.
Let $\Gamma'$ be the signed graph formed by removing the edge $v_1v_2$ from $\Gamma^*$. Then $v_2\dots v_rv_1v_sv_2$ is a negative cycle in $\Gamma'$, so $\Gamma'$ is
 obviously $\mathcal{C}_{3,4}$-free and unbalanced. By Lemma \ref{FII}, we have $\lambda_1(\Gamma')>\lambda_1(\Gamma^*)$, a contradiction.
\end{proof}

\begin{Claim} \label{a2}
If there is a negative edge  outside $C$, then it does not lie on a positive $C_4$.
\end{Claim}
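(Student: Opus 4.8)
The plan is to argue by contradiction in the same spirit as Claims~\ref{a1} and~\ref{z2}, exploiting the fact that every entry of $\mathbf{x}$ forced to be zero is severely restrictive, and then produce a strictly larger signed graph that remains $\mathcal{C}_{3,4}^-$-free and unbalanced. Suppose there is a negative edge $e=v_sv_t$ outside $C$ that lies on a positive $C_4$, say $D: v_sv_tv_av_bv_s$. By Claim~\ref{z2} we already know $x_s=x_t=0$. Since $D$ is positive and $v_sv_t$ is negative, exactly one of the two edges $v_tv_a$, $v_av_b$, $v_bv_s$ carries the "extra" negative sign; more precisely the two edges $v_tv_a$ and $v_bv_s$ incident with the negative edge's endpoints, together with $v_av_b$, contribute an odd number of negative signs among $\{v_tv_a,v_av_b,v_bv_s\}$. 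I would split into cases according to which of these three edges is the second negative edge of $D$. By Claim~\ref{z2} and Claim~\ref{C2}, any negative edge outside $C$ forces both its endpoints to have zero $\mathbf{x}$-entry, and $\mathbf{x}$ has at most two zeros; so $D$ cannot contain a second negative edge disjoint from $\{v_s,v_t\}$ unless that edge shares an endpoint with $e$. This already rules out the case where $v_av_b$ is negative (it would give $x_a=x_b=0$ on top of $x_s=x_t=0$, four zeros, contradicting Claim~\ref{C2}).

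So the second negative edge of $D$ is $v_tv_a$ or $v_bv_s$; by symmetry assume it is $v_bv_s$. Then the remaining edges $v_tv_a$ and $v_av_b$ are positive. Now I would delete the negative edge $e=v_sv_t$ to form $\Gamma'$. First I must check $\Gamma'$ is still unbalanced: the cycle $C$ survives in $\Gamma'$ (as $e\notin E(C)$ by hypothesis) and is still negative, so $\Gamma'$ is unbalanced. Next, $\Gamma'\subseteq\Gamma^*$, so $\Gamma'$ is still $\mathcal{C}_{3,4}^-$-free. Finally, since $x_s=x_t=0$ and $v_sv_t$ is a negative edge, Lemma~\ref{FII} (the "removing a negative edge" case, applied with $x_rx_s=0$ — but note Lemma~\ref{FII} requires at least one of $x_s,x_t$ nonzero, which fails here). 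This is the subtlety: with both entries zero, deleting the edge does not by itself increase $\lambda_1$. Instead I would use the positive $C_4$ to relocate an edge: form $\Gamma'$ by \emph{deleting} $v_sv_t$ and \emph{adding} the positive edge $v_tv_b$ (a diagonal of $D$), or argue via the path $v_tv_av_bv_s$ which, together with the negative $v_bv_s$, shows $v_s$ and $v_t$ are joined by a path; one checks $v_tv_b$ is not already an edge (else $D$ has a chord and one gets a shorter negative cycle or violates Claim~\ref{zh}-type reasoning applied to $D$).

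The cleaner route, which I would actually adopt, mirrors Claim~\ref{a1}: use the positive $C_4$ to re-route the negative cycle witness. Since $x_s=x_t=0$, pick any neighbor $v_p$ of $v_s$ with $x_p>0$ (exists because $\rho(\Gamma^*)>n-4>0$ forces $v_s$ to have a nonzero-weight neighbor, as $\lambda_1 x_s=0$ would otherwise be vacuous but $v_s$ is not isolated by Claim~\ref{C3}). Actually the decisive move: because $v_s$ and $v_t$ are connected in $\Gamma^*-e$ by the path $v_s v_b v_a v_t$ (using $v_bv_s$ negative, $v_av_b$ and $v_tv_a$ positive — this path has exactly one negative edge, hence is "odd"), the negative edge $e$ can be \emph{contracted away}: in $\Gamma^*$, consider switching at $\{v_s\}$; this makes $e$ positive, makes $v_sv_b$ positive, and leaves $C$ as a negative cycle (if $v_s\notin V(C)$) — but we also need to know the sign-type of edges from $v_s$ into $C$. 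The honest main obstacle here, and where I expect the real work to be, is exactly this bookkeeping: after establishing $x_s=x_t=0$ and isolating the second negative edge of $D$ to be incident with $e$, I must produce \emph{some} $\mathcal{C}_{3,4}^-$-free unbalanced graph $\Gamma'$ with $\lambda_1(\Gamma')>\lambda_1(\Gamma^*)$, and the natural candidate — delete $e$ — does not obviously work because Lemma~\ref{FII} is not applicable when both endpoint weights vanish. I would resolve this by instead deleting $e$ \emph{and} adding a positive edge $v_sv_t$'s-endpoint to a positive-weight vertex: specifically, since there are at most two zeros total and $\Gamma^*$ has $n\ge 7$ vertices, there is a positive-weight vertex $v_q\notin\{v_s,v_t\}$ not adjacent to $v_t$ only if $v_t$ has small degree; if $v_t$ is adjacent to all positive-weight vertices we instead find that $v_t$ sits in a large balanced clique and bound $\lambda_1$ by Lemma~\ref{wb}. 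The cleanest finish: delete $e$, then apply Lemma~\ref{FII} to add a positive edge between $v_t$ (weight $0$) and a nonadjacent $v_q$ with $x_q\ge 0$ and at least one of them — namely $v_q$ — could be zero too; again the "at least one nonzero" hypothesis bites.

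Given these friction points, the robust strategy is: show that the existence of such a negative edge on a positive $C_4$ forces $x_s=x_t=0$ \emph{and} forces a third zero, contradicting Claim~\ref{C2} directly. Concretely: if $v_bv_s$ is the second negative edge of $D$, then $v_bv_s$ is itself a negative edge; is it an edge of $C$? If $v_b,v_s\notin V(C)$ or it is not an edge of $C$, Claim~\ref{z2} gives $x_b=x_s=0$, and combined with $x_t=0$ we have three zeros — contradiction with Claim~\ref{C2}, done. The only escape is that $v_bv_s$ \emph{is} an edge of $C$; but then, since $e=v_sv_t$ is outside $C$, the edges $v_bv_s\in E(C)$ and $v_av_b$, $v_tv_a$ (positive) together with $e$ would let me build a shorter negative cycle or a negative $C_3$/$C_4$: indeed $v_s v_t v_a v_b v_s = D$ is a positive $C_4$, while replacing the path $v_sv_bv_a v_t$ inside $D$... here I would trace $C$ through the edge $v_bv_s$ and splice in $v_b v_a v_t v_s$ (length $3$ detour), obtaining a cycle of length $|C|-1+3 = |C|+2$ — wrong direction. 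Instead, since $v_sv_t\notin E(C)$ but $v_sv_b\in E(C)$, the vertex $v_t$ together with edges $v_tv_s$, $v_tv_a$ gives $v_t$ as a chord-endpoint-like vertex; the negative edge $v_sv_t$ plus the $C$-path from $v_s$ to $v_b$ \emph{not} through the edge $v_sv_b$, plus $v_bv_s$... This is the genuinely fiddly case-analysis I expect to be the crux, and I would handle it by the same "split $C$ at the two relevant vertices into an odd path and an even path, then close up with the available short positive path through $v_a,v_b$" device used in Claim~\ref{zh}, producing in every subcase either a negative $C_3$ or $C_4$ (contradicting $\mathcal{C}_{3,4}^-$-freeness) or a negative cycle shorter than $C$ (contradicting minimality of $C$), or a third zero entry of $\mathbf{x}$ (contradicting Claim~\ref{C2}). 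In short: the proof is a finite case analysis on which edge of the positive $C_4$ carries the second negative sign, and the main obstacle is the subcase where that edge already lies on $C$, handled by short-cycle surgery exactly as in Claims~\ref{zh} and~\ref{a1}.
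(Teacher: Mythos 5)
There is a genuine gap. Your setup is right up to a point: a positive $C_4$ through the negative edge $e=v_sv_t$ must carry a second negative edge, and whenever that second negative edge is \emph{not} an edge of $C$, Claim~\ref{z2} forces a third zero entry of $\mathbf{x}$ and Claim~\ref{C2} finishes. (In fact the paper observes more directly that, since Claims~\ref{C2} and~\ref{z2} leave room for only one negative edge outside $C$, the second negative edge of the $C_4$ is automatically an edge of $C$.) But the case you leave unresolved --- the second negative edge lies on $C$ --- is not a fiddly corner case; it is the \emph{entire} content of the claim, and you never complete it. Moreover, your dismissal of the subcase where the edge $v_av_b$ opposite to $e$ in the $C_4$ is negative is wrong: Claim~\ref{z2} applies only to negative edges \emph{outside} $C$, so it does not give $x_a=x_b=0$ when $v_av_b\in E(C)$. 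That subcase is precisely the paper's Case~2 ($\{s,t\}\cap\{1,2\}=\emptyset$ with the $C_4$ of the form $v_1v_2v_tv_sv_1$), and it requires real work.

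You also correctly identify why the obvious moves fail (deleting $e$ is useless since $x_s=x_t=0$ kills the hypothesis of Lemma~\ref{FII}), but the fix is not any of the ones you float. The paper's device is: let $v_1v_2$ be the negative edge of $C$ lying on the positive $C_4$; by Claim~\ref{C2} at least one of $x_1,x_2$ is positive (both, in the case $\{s,t\}\cap\{1,2\}=\emptyset$); delete $v_1v_2$. The resulting graph is trivially $\mathcal{C}_{3,4}^-$-free, and it is still unbalanced because the positive $C_4$ supplies a detour: one replaces the edge $v_1v_2$ (and, in the case $s=1$, uses the fourth vertex $u$ of the $C_4$, splitting on whether $u\in V(C)$, i.e., $u=v_3$) to re-route $C$ into a negative cycle avoiding $v_1v_2$, e.g.\ $v_2\dots v_rv_1v_tuv_2$ or $v_2\dots v_rv_1v_sv_tv_2$. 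Then Lemma~\ref{FII} applied to the removal of the negative edge $v_1v_2$, whose endpoints do \emph{not} both have zero weight, gives $\lambda_1(\Gamma')>\lambda_1(\Gamma^*)$, the desired contradiction. Without this re-routing argument (and the sign bookkeeping that verifies the detour cycle is indeed negative), your proof does not close.
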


\begin{proof} Suppose  that  there is a negative edge $v_sv_t$ outside $C$ but it lies on some positive $C_4$. As $v_sv_t$ is the unique negative edge outside $C$, there is a negative edge, say $v_1v_2$, on $C$
so that it lies on the positive $C_4$ containing $v_sv_t$.

Suppose first that  $\{s, t\}\cap \{1, 2\}\ne \emptyset$, say $s=1$. Then $v_t\notin V(C)$. By Claims \ref{z2} and \ref{C2}, $x_1=x_t=0$ and $x_2>0$.
Let $\Gamma'$ be the signed graph formed by removing the edge $v_1v_2$ from $\Gamma^*$.
Let $u$ be the vertex of the positive $C_4$ different from $v_1, v_2$ and $v_t$.
If $u\in V(C)$, then $u=v_3$, so $v_2v_3$, $v_tv_3$ have the same sign.
Then $\Gamma'$ contains a negative cycle $v_3\dots v_rv_1v_tv_3$ if $u\in V(C)$ and a negative cycle $v_2\dots v_rv_1v_tuv_2$ if $u\notin V(C)$.
So $\Gamma'$ is obviously $\mathcal{C}_{3,4}$-free and unbalanced.
By Lemma \ref{FII}, we have $\lambda_1(\Gamma')>\lambda_1(\Gamma^*)$, a contradiction.

Suppose next that $\{s, t\}\cap \{1, 2\}=\emptyset$. By Claims  \ref{z2} and \ref{C2}, $x_1, x_2>0$. Assume that the positive $C_4$ is $v_1v_2v_tv_sv_1$ (if it is $v_1v_tv_sv_2v_1$, then the proof is similar). Then
$v_1v_s$ and $v_2v_t$ have the same sign.
If $\{v_s, v_t\}\cap V(C)\ne \emptyset$, then  $\{v_s, v_t\}\cap V(C)=\{v_s\}$ with  $s=r$, or $\{v_s, v_t\}\cap V(C)=\{v_t\}$ with  $t=3$.
Let $\Gamma''$ be the signed graph formed by removing the edge $v_1v_2$ from $\Gamma^*$.
If $\{v_s, v_t\}\cap V(C)=\emptyset$, then $\Gamma''$ contains a negative cycle
$v_2\dots v_rv_1v_sv_tv_2$; Otherwise,
$\Gamma''$ contains a negative cycle obtained by replacing $v_rv_1, v_1v_2$ with $v_rv_t, v_tv_2$ in $C$ if $\{v_s, v_t\}\cap V(C)=\{v_s\}$ (resp. by replacing $v_1v_2, v_2v_3$ with $v_1v_s, v_sv_3$ in $C$ if $\{v_s, v_t\}\cap V(C)=\{v_t\}$).
So $\Gamma''$ is   $\mathcal{C}_{3,4}^-$-free and unbalanced.
Then we have By Lemma \ref{FII} that
$\lambda_1(\Gamma'')>\lambda_1(\Gamma^*)$, a contradiction.
\end{proof}

\begin{Claim}\label{a3}
$C$ contains exactly one negative edge.
\end{Claim}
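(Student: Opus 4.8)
The plan is to argue by contradiction. Since $C$ is a negative cycle it carries an odd number of negative edges, so it suffices to rule out that it has three or more. The first step I would record is a structural fact used throughout: \emph{if $ab$ is a negative edge on $C$, then $a$ and $b$ have no common neighbour}. Indeed, a common neighbour $u$ satisfies $u\notin V(C)$ because $C$ is induced (Claim~\ref{zh}); since at most one negative edge lies outside $C$, the edges $ua$ and $ub$ cannot both be negative; if exactly one of them is negative then $abu$ is a positive $C_3$ through the negative edge $ab$ of $C$, contradicting Claim~\ref{a1}, while if both are positive then $abu$ is a negative $C_3$, contradicting that $\Gamma^*$ is $\mathcal{C}_{3,4}^-$-free. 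In particular no $C_3$ of $\Gamma^*$ passes through a negative edge of $C$.

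Now assume $C$ has at least three negative edges. I would first treat the case in which some negative edge $e=ab$ of $C$ lies on a $C_4$ of $\Gamma^*$; this $C_4$ is necessarily positive since $\Gamma^*$ has no negative $C_4$, so the path of length $3$ joining $a$ to $b$ along this $C_4$ carries an odd number of negative edges, while $C-e$ is a path from $a$ to $b$ carrying an even number of negative edges (as $C$ is negative and $e$ is negative). The union of these two $a$--$b$ paths is a closed walk of $\Gamma^*-e$ whose edge signs multiply to $-1$, hence its edge set contains a negative cycle; thus $\Gamma^*-e$ is still unbalanced, and it is $\mathcal{C}_{3,4}^-$-free as a subgraph. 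By Lemma~\ref{FII} applied to the negative edge $e$ we then get $\lambda_1(\Gamma^*-e)>\lambda_1(\Gamma^*)$, a contradiction, provided not both $x_a,x_b$ vanish. If they do, then $v_a,v_b$ use up the at most two zero entries permitted by Claim~\ref{C2}, so every other negative edge of $C$ (there are at least two) has two non-zero endpoints; if such an edge again lies on a $C_4$ the same argument applies, and otherwise we pass to the next case.

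In the remaining situation no negative edge of $C$ lies on a $C_3$ or a $C_4$ of $\Gamma^*$. Let $e_1,\dots,e_{2k+1}$ with $2k+1\ge 3$ be the negative edges of $C$, write $e_i=v_{a_i}v_{b_i}$, and let $\Gamma'$ be obtained from $\Gamma^*$ by reversing the signs of $e_2,\dots,e_{2k+1}$. Then $C$ has exactly one negative edge in $\Gamma'$, so $\Gamma'$ is unbalanced; and $\Gamma'$ is $\mathcal{C}_{3,4}^-$-free, because $C$ is chordless, so any $C_3$ or $C_4$ of $\Gamma^*$ meets $E(C)$ in at most two (mutually adjacent) edges, hence it meets the reversed set in $0$ edges, or is a $C_4$ meeting it in two edges (sign unchanged), or passes through a single reversed edge — impossible, since a reversed edge is a negative edge of $C$ and lies on no $C_3$ or $C_4$. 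Since $\mathbf{x}\ge 0$, $\mathbf{x}^\top A(\Gamma')\mathbf{x}=\lambda_1(\Gamma^*)+4\sum_{i\ge 2}x_{a_i}x_{b_i}\ge\lambda_1(\Gamma^*)$, with equality only if each $e_i$ ($i\ge 2$) has an endpoint with zero entry; as we may choose which negative edge of $C$ to keep, we may assume at most one negative edge of $C$ has two non-zero endpoints, so all the (at least three) negative edges of $C$ are incident to the set $Z$ of at most two zero-entry vertices. Using that each vertex lies on exactly two edges of $C$, that a negative edge outside $C$ would force both its endpoints into $Z$ (Claim~\ref{z2}) and yet be a chord, and that $C$ must be negative, one narrows this down to $|Z|=2$ with both vertices on $C$, exactly three negative edges, and some $p\in Z$ having both its $C$-edges negative; the eigenvalue equation at $p$ together with $x_p=0$ and $\lambda_1>0$ then forces the $C$-neighbours of $p$ (or further small configurations) to also have zero entries, contradicting Claim~\ref{C2}.

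The step I expect to be the main obstacle is precisely this last one: eliminating the rigid configuration in which the few negative edges of $C$ all cluster at the at most two vertices carrying zero eigenvector weight. Doing it cleanly requires a careful analysis of the eigenvalue equations at those two vertices, with sub-cases according to whether they are adjacent, to which edges incident to them are negative, and to the location of the at most one negative edge lying off $C$; it may also be convenient to invoke that deleting both zero-weight vertices leaves a signed graph still satisfying $\lambda_1>n-4$ on $n-2$ vertices, which must therefore be nearly complete. Everything outside this configuration reduces, as above, to an edge deletion or a sign reversal that increases $\lambda_1$, contradicting the maximality of $\Gamma^*$.
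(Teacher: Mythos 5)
Your overall strategy---reverse the signs of an even number of negative edges of $C$ so that $C$ stays negative, verify $\mathcal{C}_{3,4}^-$-freeness via Claims~\ref{zh}, \ref{a1} and \ref{a2}, and then play a Rayleigh-quotient comparison against the zero-entry bound of Claim~\ref{C2}---is the same in spirit as the paper's. Your preliminary observations are correct: no negative edge of $C$ lies on a triangle (the common-neighbour argument), and a negative edge of $C$ lying on a positive $C_4$ and having a nonzero endpoint can be deleted while preserving unbalancedness, contradicting maximality via Lemma~\ref{FII}; the paper proves essentially the same two facts inside its verification that the sign-reversed graph is $\mathcal{C}_{3,4}^-$-free. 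The organizational difference is that the paper reverses exactly two negative edges of $C$ at a time, while you reverse all but one.

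The genuine gap is your final step, which you yourself flag as the main obstacle. From the Rayleigh equality you extract only $x_{a_i}x_{b_i}=0$ for each reversed edge and then embark on an unfinished case analysis of where the at most two zero-entry vertices sit, ending with ``one narrows this down to \dots or further small configurations.'' The missing idea is to use the full eigenvector equation rather than the quotient alone: equality forces $\mathbf{x}$ to be an eigenvector of $A(\Gamma')$ as well, so $(A(\Gamma')-A(\Gamma^*))\mathbf{x}=0$; since $A(\Gamma')-A(\Gamma^*)$ is twice the nonnegative adjacency matrix of the set of reversed edges and $\mathbf{x}\ge 0$, \emph{both} endpoints of \emph{every} reversed edge must carry zero entries. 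Two or more distinct edges of $C$ span at least three vertices, which contradicts Claim~\ref{C2} immediately---no discussion of adjacency of the zero vertices, of which incident edges are negative, or of the off-$C$ negative edge is needed. This is precisely how the paper concludes (it writes out the perturbed eigenvalue equations at the zero-entry vertices of its two reversed edges), and it closes your version as well. A smaller point: your ``remaining situation'' is not literally ``no negative edge of $C$ lies on a $C_4$,'' since the deferred edge with two zero endpoints may still lie on one; you must keep exactly that edge un-reversed for your freeness check of $\Gamma'$ to go through, so the case hypothesis should be stated accordingly.
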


\begin{proof}
Suppose that $C$ contains more than one negative edges. Choose two negative edges $v_1v_2$ and $v_iv_{i+1}$ on $C$, where $2\le i\le r$. If $i=r$, then $v_{i+1}=v_1$.
Let $\widetilde\Gamma$ be a signed graph from $\Gamma^*$ by reversing the sign of $v_1v_2$ and $v_iv_{i+1}$ from $\Gamma^*$.

We claim that $\widetilde\Gamma$ is $\mathcal{C}_{3,4}^-$-free.

Suppose that there is a negative $C_3$ in $\widetilde\Gamma$. Then it contains one of the positive edges $v_1v_2$ and $v_iv_{i+1}$. Claim \ref{zh}, it contains exactly one of the positive edges $v_1v_2$ and $v_iv_{i+1}$,  say  $v_1v_2$.
Then reversing the sign of $v_1v_2$ in this negative $C_3$ in $\widetilde\Gamma$, we obtain a positive $C_3$ containing the negative $v_1v_2$ in $\Gamma^*$, contradicting Claim \ref{a1}.

Suppose there is a negative $C_4$ in $\widetilde\Gamma$. Then it contains one of $v_1v_2$ and $v_iv_{i+1}$. If it contains both $v_1v_2$ and $v_iv_{i+1}$, then $i=2$ or $i=r$ by Claim \ref{zh}, so by reversing the sign of $v_1v_2$ and $v_iv_{i+1}$ in this negative $C_4$ in $\widetilde\Gamma$, we obtain a negative $C_4$ in $\Gamma^*$, which is impossible.
So it contains exactly one of $v_1v_2$ and $v_iv_{i+1}$, say $v_1v_2$.
Assume that it is $v_1v_sv_tv_2v_1$. By  reversing the sign of $v_1v_2$ in this negative $C_4$ of $\widetilde\Gamma$, we obtain a positive $C_4$ of $\Gamma^*$ containing the negative edge $v_1v_2$. By Claim \ref{a2}, it does not contain any negative edge outside $C$, so it contains the negative edge $v_2v_3$ or $v_1v_r$, i.e., $t=3$ or $s=r$, say $t=3$.
By Claim \ref{C2}, one of $x_1$, $x_2$ and $x_3$ is nonzero.
Construct a signed graph $\Gamma'$ by removing the edge $v_1v_2$ from $\Gamma^*$ if $x_1\ne0$ or $x_2\ne0$, and by removing an edge $v_2v_3$ if $x_3\ne0$. Note that replacing two edges $v_1v_2$ and $v_2v_3$ on $C$ by two edges $v_1v_s$ and $v_sv_3$, we get a negative cycle in $\Gamma'$, so $\Gamma'$ is $\mathcal{C}_{3,4}^-$-free and unbalanced. By Lemma \ref{FII}, we have $\lambda_1(\Gamma')>\lambda_1(\Gamma^*)$, a contradiction.

It follows that $\widetilde\Gamma$ is $\mathcal{C}_{3,4}^-$-free, as desired.

As the cycle obtained from $C$ by reversing the sign of $v_1v_2$ and $v_iv_{i+1}$ is a negative cycle in  $\widetilde\Gamma$, $\widetilde\Gamma$ is unbalanced.
By Rayleigh's principle, we have
\[
\lambda_1(\widetilde\Gamma)-\lambda_1(\Gamma^*)\ge\mathbf{x}^T(A(\widetilde\Gamma)-A(\Gamma^*))\mathbf{x}
=4(x_1x_2+x_ix_{i+1})\ge0,
\]
so $\lambda_1(\widetilde\Gamma)=\lambda_1(\Gamma^*)$, implying that $\mathbf{x}$ is an eigenvector of
$A(\widetilde\Gamma)$ associated with $\lambda_1(\widetilde\Gamma)$.
If $i=2,r$, say $i=2$, then
\[
\lambda_1(\widetilde\Gamma)x_{1}=\lambda_1(\Gamma^*)x_{1}+2x_2
\]
and
\[
\lambda_1(\widetilde\Gamma)x_{2}=\lambda_1(\Gamma^*)x_{2}+2(x_1+x_3),
\]
so $x_2=0$ and $x_1+x_3=0$, implying that $x_1=x_2=x_3=0$,  contradicting Claim \ref{C2}.
If $3\le i\le r-1$, then $x_1x_2=x_ix_{i+1}=0$, so assuming $x_1=x_i=0$, we have
\[
\lambda_1(\widetilde\Gamma)x_{i}=\lambda_1(\Gamma^*)x_{i}+2x_{i+1},
\]
so $x_{i+1}=0$, contradicting Claim \ref{C2}.
\end{proof}

By Claim \ref{a3}, $C$ contains exactly one negative edge, say $v_1v_2$.

\begin{Claim}\label{C4}
There is no negative edge of $\Gamma^*$ outside $C$.
\end{Claim}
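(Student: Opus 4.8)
The plan is to argue by contradiction: suppose $\Gamma^*$ has a negative edge $v_sv_t$ outside $C$. By Claims~\ref{z2} and~\ref{C2}, $x_s=x_t=0$, and these are the \emph{only} zero entries of $\mathbf{x}$. The driving observation is that, because $x_s=x_t=0$, the eigenvalue equations $\lambda_1(\Gamma^*)x_s=0$ and $\lambda_1(\Gamma^*)x_t=0$ become \emph{sums of strictly positive numbers} once the single term coming from $v_sv_t$ is discarded, which forces $v_s$ and $v_t$ to have very small degree. To prepare for this I would first pin down the negative edges at $v_s$: a negative edge $v_sw$ with $w\neq v_t$ that is not an edge of $C$ would, by Claim~\ref{z2}, force $x_w=0$, hence $w=v_t$, a contradiction; and if it is an edge of $C$ it must be $v_1v_2$ by Claim~\ref{a3}. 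So the only negative edge at $v_s$ is $v_sv_t$ unless $v_s\in\{v_1,v_2\}$ (in which case $v_s$ also lies on $v_1v_2$); the same holds for $v_t$. Finally, since $C$ is induced (Claim~\ref{zh}), $v_sv_t$ is not a chord of $C$, so \emph{at most one} of $v_s,v_t$ lies on $C$. This splits the argument into two cases.

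If $v_s,v_t\notin V(C)$, then the only negative edge at $v_s$ is $v_sv_t$, so $0=\lambda_1(\Gamma^*)x_s=-x_t+\sum_{v\in N(v_s)\setminus\{v_t\}}x_v=\sum_{v\in N(v_s)\setminus\{v_t\}}x_v$, a sum of positive terms (each such $v\notin\{v_s,v_t\}$); hence $N(v_s)=\{v_t\}$, and symmetrically $N(v_t)=\{v_s\}$. Then $\{v_s,v_t\}$ is a connected component of $\Gamma^*$, contradicting Claim~\ref{C3} and $n\geq 7$. If instead exactly one of $v_s,v_t$ lies on $C$, say $v_s=v_i\in V(C)$ and $v_t\notin V(C)$, then (since $v_t\notin\{v_1,v_2\}$) the only negative edge at $v_t$ is $v_tv_i$, and the same computation at $v_t$ gives $N(v_t)=\{v_i\}$, so $d_{\Gamma^*}(v_t)=1$. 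Moreover, if $i\notin\{1,2\}$, then $v_i$'s only negative edge is $v_iv_t$, and the equation at $v_i$ forces $N(v_i)\subseteq\{v_t\}$, contradicting $d_{\Gamma^*}(v_i)\geq 2$ (as $v_i\in V(C)$).

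It remains to treat $i\in\{1,2\}$; say $i=1$, the case $i=2$ being symmetric. Here $v_t$ is a pendant vertex joined to $v_1$ by a negative edge, and $x_1=x_t=0$. I would form $\Gamma'$ from $\Gamma^*$ by deleting $v_1v_t$ and adding the positive edge $v_2v_t$ (note $v_2\neq v_t$ and $v_2$ is not a neighbour of $v_t$ in $\Gamma^*$, since $d_{\Gamma^*}(v_t)=1$). As $v_t$ still has degree $1$ in $\Gamma'$, it lies on no cycle, so $\Gamma'$ has no new short cycles and still contains the negative cycle $C$; thus $\Gamma'$ is a $\mathcal{C}_{3,4}^-$-free unbalanced signed graph of order $n$. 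Since $x_1x_t=x_2x_t=0$, Rayleigh's principle gives $\lambda_1(\Gamma')\geq \mathbf{x}^\top A(\Gamma')\mathbf{x}=\mathbf{x}^\top A(\Gamma^*)\mathbf{x}=\lambda_1(\Gamma^*)=\rho(\Gamma^*)$, so by the maximality of $\Gamma^*$ we get $\lambda_1(\Gamma')=\lambda_1(\Gamma^*)$ and $\mathbf{x}$ is an eigenvector of $A(\Gamma')$ for $\lambda_1(\Gamma')$. But the eigenvalue equation at $v_t$ in $\Gamma'$ reads $\lambda_1(\Gamma')x_t=x_2$, i.e.\ $0=x_2$, contradicting $x_2>0$ (since $v_2\notin\{v_1,v_t\}=\{v_s,v_t\}$).

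The main obstacle I expect is the bookkeeping of exactly which negative edges can be incident to $v_s$ and $v_t$ — this is where Claims~\ref{z2}, \ref{a3}, and~\ref{zh} must be combined carefully — together with verifying, in the pendant case, that the swap target $v_2$ is genuinely a non-neighbour of $v_t$ carrying a positive eigenvector entry, so that Rayleigh's principle applies and the eigenvalue equation at $v_t$ is violated. Once one sees that $x_s=x_t=0$ collapses the relevant eigenvalue equations into sums of positive numbers, the degree bounds and the resulting contradictions come out quickly.
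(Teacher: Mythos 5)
Your proof is correct, and it takes a genuinely different --- and considerably shorter --- route than the paper's. The paper proceeds by a lengthy case analysis on the position of $v_s,v_t$ relative to $C$ and on which of $v_1v_s,v_1v_t,v_2v_s,v_2v_t$ are present, in each case constructing a perturbed signed graph, verifying its $\mathcal{C}_{3,4}^-$-freeness, and invoking Lemma~\ref{FII}; along the way it leans on Claims~\ref{a1} and~\ref{a2}. You instead exploit the eigenvalue equation at the two zero-entry vertices: since by Claims~\ref{z2}, \ref{C2} and~\ref{a3} the only negative edges of $\Gamma^*$ are $v_1v_2$ and $v_sv_t$, and every entry of $\mathbf{x}$ other than $x_s,x_t$ is strictly positive, the relation $0=\lambda_1(\Gamma^*)x_s=\sum_{v\in N_{\Gamma^*}(v_s)}\sigma(v_sv)x_v$ forces $N_{\Gamma^*}(v_s)=\{v_t\}$ whenever $v_s\notin\{v_1,v_2\}$ (and likewise for $v_t$). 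Together with Claim~\ref{zh}, which guarantees $v_sv_t$ is not a chord of $C$, this instantly disposes of the case where neither endpoint is in $\{v_1,v_2\}$ (a two-vertex component contradicts Claim~\ref{C3}) and the case where an endpoint is a cycle vertex other than $v_1,v_2$ (it would need degree at least $2$), leaving only the pendant configuration, which you dispatch with the same delete-and-add, Rayleigh-quotient, eigenvector-equation technique the paper itself uses in its Case~1.2. In effect you deploy the local eigenvalue-equation argument that the paper postpones to Claim~\ref{C6}, applied earlier and to greater effect: for this particular claim your route needs neither Claim~\ref{a1} nor Claim~\ref{a2} and reduces the freeness-verification bookkeeping to the single easy perturbation at a degree-one vertex. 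I checked the details (the uniqueness of the two zero entries, the sign bookkeeping at $v_s$ and $v_t$, and that the modified graph in the pendant case is $\mathcal{C}_{3,4}^-$-free and unbalanced because $v_t$ lies on no cycle there while $C$ survives) and found no gaps.
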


\begin{proof} Suppose that this is not true. Then
there is a negative edge, say $v_sv_t$,  of $\Gamma^*$  outside $C$.
By Claim \ref{z2}, $x_s=x_t=0$.
 By Claims \ref{z2} and \ref{C2}, $v_sv_t$ is the unique negative edge of $\Gamma^*$  that is not on $C$.
By Claim \ref{a3},  $v_1v_2$ and $v_sv_t$ are the only negative edges of $\Gamma^*$.

Suppose that $\{s,t\}\cap \{1,2\}\ne\emptyset$, say $s=1$. Then $v_t$ lies outside $C$.
By Claim \ref{a1}, $v_2v_t\notin E(\Gamma^*)$. Let $\Gamma'$ be a signed graph formed  by adding a positive edge $v_2v_t$ to $\Gamma^*$.
Evidently, there is no negative $C_3$ containing $v_2v_t$ in $\Gamma'$. If there is a negative $C_4$ containing $v_2v_t$  in $\Gamma'$, then it contains one of $v_1v_2$ and $v_1v_t$, say $v_1v_2$, then it is $v_1v_2v_tuv_1$ for some common neighbor $u$ of $v_1$ and $v_t$ in $\Gamma^*$, so $v_1v_tuv_1$ is a negative $C_3$ in $\Gamma^*$, a contradiction. So $\Gamma'$ is $\mathcal{C}_{3,4}^-$-free,  and as $C$ is a negative cycle of  $\Gamma'$, it is  unbalanced. By Claim \ref{C2}, $x_2>0$. Then
by Lemma \ref{FII}, we have $\lambda_1(\Gamma')>\lambda_1(\Gamma^*)$, a contradiction. Thus $\{s,t\}\cap \{1,2\}=\emptyset$.
By Claim \ref{C2}, $x_1, x_2>0$.

\noindent{\bf Case 1.} $\{v_s, v_t\}\cap V(C)\ne\emptyset$.

Assume that $v_s\in V(C)$ with $\lceil\frac{r+1}{2}\rceil+1\le s\le r$. Then $v_t\notin V(C)$.
By Claim \ref{zh}, $C$ is an induced cycle.

\noindent{\bf Case 1.1.}   $s\le r-2$.

In this case, $r\ge 7$.
By Claim \ref{zh}, $v_2v_s\not\in E(\Gamma^*)$. Let $\widetilde\Gamma$ be a signed graph formed by adding a positive $v_2v_s$. It is obvious that $\widetilde\Gamma$ is unbalanced as $C$ is also the negative cycle in $\widetilde\Gamma$.

We claim that $\widetilde\Gamma$ is $\mathcal{C}_{3,4}^-$-free.
If there is a negative $C_3$ in $\widetilde\Gamma$, then it contains $v_2v_s$ and exactly one of $v_1v_2$ and $v_sv_t$, so  $v_1v_s\in E(\Gamma^*)$ or $v_2v_t\in E(\Gamma^*)$. By Claim \ref{zh}, $v_1v_s\not\in E(\Gamma^*)$. If $v_2v_t\in E(\Gamma^*)$, then we would have a negative cycle
$v_2\dots v_sv_tv_2$ that is shorter than $C$ in $\Gamma^*$, a contradiction. So $\widetilde\Gamma$ is $\mathcal{C}_3^-$-free.
Suppose that there is a negative $C_4$ in $\widetilde\Gamma$. Then it contains $v_2v_s$ and exactly one of $v_1v_2$ and $v_sv_t$. Suppose that this negative $C_4$ of  $\widetilde\Gamma$ contains $v_1v_2$. Then it is
$v_1v_2v_suv_1$ for some common neighbor $u$ of $v_1$ and $v_s$. By Claim \ref{zh} and $s\le r-2$, $u\not\in V(C)$, so
$v_1\dots v_suv_1$ is a negative cycle in $\Gamma^*$ that is shorter than $C$, which is a contradiction.
Suppose that this negative $C_4$ of  $\widetilde\Gamma$ contains $v_sv_t$. Then it is $v_2v_sv_twv_2$ for some common neighbor $w$ of $v_2$ and $v_t$.
If $w\in V(C)$, then $w=v_3$. So $\Gamma^*$ contains a negative cycle $v_3\dots v_sv_tv_3$ if $w\in V(C)$, and $v_2\dots v_sv_twv_2$ if $w\not\in V(C)$, which are shorter than $C$, a contradiction.
It follows that $\widetilde\Gamma$ is $\mathcal{C}_{3,4}^-$-free.

As $x_2>0$, we have by Lemma \ref{FII} that $\lambda_1(\widetilde\Gamma)>\lambda_1(\Gamma^*)$, a contradiction.

\noindent{\bf Case 1.2.}  $s=r-1$.

Note that $v_1v_t\notin E(\Gamma^*)$, as otherwise $v_1v_rv_{r-1}v_tv_1$ is a negative $C_4$ in $\Gamma^*$, a contradiction.
Let $\widetilde\Gamma$ be a signed graph formed by removing a negative edge $v_{r-1}v_t$ and adding a positive $v_1v_t$ from $\Gamma^*$.

We claim that $\widetilde\Gamma$ is $\mathcal{C}_{3,4}^-$-free.
If there is a negative $C_3$ in $\widetilde\Gamma$, then it contains both $v_1v_t$ and $v_1v_2$, so  $v_2v_t\in E(\Gamma^*)$, implying that there is  a negative cycle
$v_2\dots v_{r-1}v_tv_2$ that is shorter than $C$ in $\Gamma^*$, a contradiction. So $\widetilde\Gamma$ is $\mathcal{C}_3^-$-free.
Suppose that there is a negative $C_4$ in $\widetilde\Gamma$. Then it contains both $v_1v_t$ and $v_1v_2$, so it is $v_tv_1v_2uv_t$ for some common neighbor $u$ of $v_2$ and $v_t$ in $\Gamma^*$. If $u\in V(C)$, then $u=v_3$, so $v_3\dots v_{r-1}v_tv_3$  is negative cycle  shorter than $C$ in $\Gamma^*$, a contradiction. So $u\notin V(C)$. Let $\Gamma''$ be a signed graph formed by removing $v_1v_2$ from $\Gamma^*$.
As $v_2\dots v_{r-1}v_tuv_2$ is a negative cycle in $\Gamma''$,    $\Gamma''$ is $\mathcal{C}_{3,4}^-$-free and unbalanced. By Lemma \ref{FII}, we have  $\lambda_1(\Gamma'')>\lambda_1(\Gamma^*)$,  a contradiction.
It follows that $\widetilde\Gamma$ is $\mathcal{C}_{3,4}^-$-free.
Evidently, $\widetilde\Gamma$ is unbalanced.
By Rayleigh's principle, we have
\[
\lambda_1(\widetilde\Gamma)-\lambda_1(\Gamma^*)\ge\mathbf{x}^T(A(\widetilde\Gamma)
-A(\Gamma^*))\mathbf{x}=2x_t(x_1+x_{r-1})=0,
\]
so $\lambda_1(\widetilde\Gamma)=\lambda_1(\Gamma^*)$. Then $\lambda_1(\widetilde\Gamma)x_t=\lambda_1(\Gamma^*)x_t+x_1+x_{r-1}$, which implies that $x_1=0$, a contradiction.

\noindent{\bf Case 1.3.}  $s=r$.


By Claim \ref{a2},
$v_2v_t\notin E(\Gamma^*)$.
Let $\widetilde\Gamma$ be a signed graph formed by adding a positive $v_2v_t$ from $\Gamma^*$.

If there is a negative $C_3$ in $\widetilde\Gamma$, then it contains $v_2v_t$ and exactly one of $v_1v_2$ and $v_rv_t$, so $v_1v_t\in E(\Gamma^*)$ or $v_2v_r\in E(\Gamma^*)$.
But $v_1v_t\in E(\Gamma^*)$ leads to a negative $C_3:=v_1v_rv_tv_1$ in $\Gamma^*$, and $v_2v_r\in E(\Gamma^*)$ contradicts Claim \ref{zh}. So $\widetilde\Gamma$ is $\mathcal{C}_3^-$-free.
Suppose that there is a negative $C_4$ in $\widetilde\Gamma$. Then it contains $v_2v_t$ and exactly one of $v_1v_2$ and $v_rv_t$. If it contains $v_1v_2$, then it is
$v_1v_2v_tuv_1$ for some common neighbor $u$ of $v_1$ and $v_t$ in $\Gamma^*$ with $u\ne v_r$, as  $u\not\in V(C)$ by Claim \ref{zh}, $v_1v_rv_tuv_1$ is a negative $C_4$ in $\Gamma^*$, a contradiction.
Similarly, we have a contradiction if it contains $v_rv_t$.
It follows that $\widetilde\Gamma$ is $\mathcal{C}_{3,4}^-$-free.
Evidently, $\widetilde\Gamma$ is unbalanced.
As $x_1, x_2>0$, we have by Lemma \ref{FII}, $\lambda_1(\widetilde\Gamma)>\lambda_1(\Gamma^*)$, a contradiction.

\noindent{\bf Case 2.} $\{v_s, v_t\}\cap V(C)=\emptyset$.

By Claim \ref{a2}, $\{v_1v_s, v_2v_t\}\not\subseteq E(\Gamma^*)$, and
$\{v_1v_t, v_2v_s\}\not\subseteq E(\Gamma^*)$.
Assume that $v_1v_s\notin E(\Gamma^*)$. There are two possibilities: $v_1v_t\notin E(\Gamma^*)$ or $v_2v_s\notin E(\Gamma^*)$.

If  $v_1v_t\notin E(\Gamma^*)$, then $\{v_2v_t, v_2v_s\}\not\subseteq E(\Gamma^*)$, as otherwise, $v_2v_sv_tv_2$ would be  a negative $C_3$. So
one of the following must hold:
\begin{enumerate}
\item[(i)] $v_2v_s\notin E(\Gamma^*)$ and $v_2v_t\notin E(\Gamma^*)$,

\item[(ii)] $v_2v_t\in E(\Gamma^*)$ and $v_2v_s\notin E(\Gamma^*)$,

\item[(iii)] $v_2v_s\in E(\Gamma^*)$ and $v_2v_t\notin  E(\Gamma^*)$.
\end{enumerate}

Suppose first that (i) holds.
Suppose that $u\in N_{\Gamma^*}(v_1)\cap N_{\Gamma^*}(v_t)\ne\emptyset$ and $w\in N_{\Gamma^*}(v_2)\cap N_{\Gamma^*}(v_s)\ne\emptyset$. Note that $u\ne w$, as otherwise, $v_1v_2uv_1$ is a negative $C_3$ in $\Gamma^*$, a contradiction.
Construct a signed graph $\Gamma''$ by removing $v_1v_2$ from $\Gamma^*$.
As $v_2\dots v_rv_1uv_tv_swv_2$ is a negative cycle in $\Gamma''$,
it is easily to see that $\Gamma''$ is $\mathcal{C}_{3,4}^-$-free and unbalanced. By Lemma \ref{FII}, we have  $\lambda_1(\Gamma'')>\lambda_1(\Gamma^*)$, a contradiction. It follows that  $N_{\Gamma^*}(v_1)\cap N_{\Gamma^*}(v_t)=\emptyset$ or $N_{\Gamma^*}(v_2)\cap N_{\Gamma^*}(v_s)=\emptyset$.
Assume that $N_{\Gamma^*}(v_1)\cap N_{\Gamma^*}(v_t)=\emptyset$.

Let $\widehat\Gamma$ be a signed graph formed by removing the edge $v_sv_t$ and adding a positive edge $v_2v_t$ from $\Gamma^*$.
If there is a negative $C_3$ in $\widehat\Gamma$, then it contains both $v_2v_t$ and $v_1v_2$, so $v_1v_t\in E(\Gamma^*)$, a contradiction. 
If  there is a negative $C_4$ in $\widehat\Gamma$, then it contains both $v_2v_t$ and $v_1v_2$, so  $v_1$ and $v_t$ have a common neighbor in $\Gamma^*$, a contradiction.
It follows that $\widehat\Gamma$ is $\mathcal{C}_{3,4}^-$-free.
Evidently, $\widetilde\Gamma$ is unbalanced.
Note that
\[
\lambda_1(\widehat\Gamma)-\lambda_1(\Gamma^*)\ge2x_t(x_2+x_s)=0.
\]
Then $\lambda_1(\widehat\Gamma)=\lambda_1(\Gamma^*)$, implying that $\lambda_1(\widehat\Gamma)x_t=\lambda_1(\Gamma^*)x_t+x_2+x_s$, so $x_2=0$,
a contradiction.

Suppose next that (ii) holds.
Let $\widetilde\Gamma$ be a signed graph formed by adding a positive edge $v_1v_s$ from $\Gamma^*$.
If there is a negative $C_3$ in $\widetilde\Gamma$, then it contains $v_1v_s$ and exactly one of $v_1v_2$ and $v_sv_t$, so $v_2v_s\in E(\Gamma^*)$ or $v_1v_t\in E(\Gamma^*)$, a contradiction.
If there is a negative $C_4$ in $\widetilde\Gamma$, then it contains $v_1v_s$ and exactly one of $v_1v_2$ and $v_sv_t$, say $v_1v_2$.  Then $v_2$ and $v_s$ have a common neighbor $u$ in $\Gamma^*$ with $n\ne v_t$, so $v_2uv_sv_tv_2$ is a negative $C_4$ in $\Gamma^*$, a contradiction.
It follows that $\widetilde\Gamma$ is $\mathcal{C}_{3,4}^-$-free.
Evidently, $\widetilde\Gamma$ is unbalanced.
As $x_1>0$, we have by Lemma \ref{FII}, $\lambda_1(\widetilde\Gamma)>\lambda_1(\Gamma^*)$, a contradiction.

Suppose that (iii) holds.
Let $\widetilde\Gamma$ be a signed graph formed by adding a positive $v_1v_t$ from $\Gamma^*$.
If there is a negative $C_3$ in $\widetilde\Gamma$, then it contains $v_1v_t$ and exactly one of $v_1v_2$ and $v_sv_t$, so $v_2v_t\in E(\Gamma^*)$ or $v_1v_s\in E(\Gamma^*)$, a contradiction.
Suppose that there is a negative $C_4$ in $\widetilde\Gamma$. Then it contains $v_1v_t$ and exactly one of $v_1v_2$ and $v_sv_t$, say $v_1v_2$.
 Then $v_2$ and $v_t$ have a common neighbor $v\ne v_1,v_s$, so $v_2vv_tv_sv_2$ is a negative $C_4$ in $\Gamma^*$, a contradiction.
It follows that $\widetilde\Gamma$ is $\mathcal{C}_{3,4}^-$-free.
Evidently, $\widetilde\Gamma$ is unbalanced.
As $x_1>0$, we have by Lemma \ref{FII}, $\lambda_1(\widetilde\Gamma)>\lambda_1(\Gamma^*)$, a contradiction.

Suppose that  $v_2v_s\notin E(\Gamma^*)$. Then it is impossible that both $v_tv_1\in E(\Gamma^*)$ and $v_tv_2\in E(\Gamma^*)$. By similar argument as above, we also have a contradiction.
\end{proof}

%

By Claim  \ref{C4}, $v_1v_2$ is the unique negative edge of $\Gamma^*$, which lies on $C$.

\begin{Claim}\label{C6}
$x_i>0$ for $3\le i\le n$.
\end{Claim}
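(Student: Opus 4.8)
The goal is to show that, outside the two endpoints $v_1,v_2$ of the unique negative edge of $\Gamma^*$, every entry of the Perron eigenvector $\mathbf{x}$ is strictly positive. The natural strategy is a connectedness-of-support argument, exploiting that $v_1v_2$ is the only negative edge and that $\rho(\Gamma^*)=\lambda_1(\Gamma^*)>n-4>0$. First I would record the eigenvalue equations $\lambda_1(\Gamma^*)\,x_i=\sum_{v_j\sim v_i}\sigma^*(v_iv_j)\,x_j$ for each $i$. Because $\sigma^*\equiv 1$ on every edge except $v_1v_2$, for any vertex $v_i\notin\{v_1,v_2\}$ this reads $\lambda_1(\Gamma^*)\,x_i=\sum_{v_j\sim v_i}x_j$ with all $x_j\ge 0$; hence $x_i=0$ forces $x_j=0$ for every neighbor $v_j$ of $v_i$ other than possibly the situation where $v_i$ is adjacent to exactly one of $v_1,v_2$ — but even then the term is $+x_{1}$ or $+x_{2}\ge 0$, so $x_i=0$ still forces $x_j=0$ for \emph{all} neighbors $v_j$ of $v_i$. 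Thus the zero set $Z=\{v_i:x_i=0\}$ restricted to $V(\Gamma^*)\setminus\{v_1,v_2\}$ is ``closed under taking neighbors'' in a strong sense as long as we stay away from the negative edge.

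Next I would combine this with Claim~\ref{C2} (at most two zero entries) and Claim~\ref{C3} (connectedness). Suppose for contradiction that $x_i=0$ for some $i$ with $3\le i\le n$. By the preceding paragraph every neighbor of $v_i$ then lies in $Z$, so $d_{\Gamma^*}(v_i)\le |Z|\le 2$, and in fact $v_i$'s neighbors are among the at most two zero vertices. Since $\Gamma^*$ is connected on $n\ge 7$ vertices, $v_i$ has a neighbor; call it $v_k$, with $x_k=0$. If $v_k\notin\{v_1,v_2\}$ we repeat: every neighbor of $v_k$ is in $Z$ as well, so both $v_i$ and $v_k$ have all their neighbors inside the two-element set $Z=\{v_i,v_k\}$, which means $\{v_i,v_k\}$ is a connected component of $\Gamma^*$ — contradicting connectedness and $n\ge 7$. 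Hence any zero vertex $v_i$ with $i\ge 3$ must be adjacent to one of $v_1,v_2$, and moreover its \emph{only} possible neighbors are $v_1,v_2$ together with the other zero vertex; pushing the count, $v_i$ has degree at most $2$ with neighbors drawn from $\{v_1,v_2\}\cup Z$.

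The remaining work is to rule out this last degenerate configuration. Here I would invoke the structure of $C$: recall $C=v_1\cdots v_r v_1$ is a shortest negative (hence induced, by Claim~\ref{zh}) cycle with $r\ge 5$, and $v_1v_2$ is its unique negative edge. If the zero vertex $v_i$ ($i\ge 3$) lies on $C$, then since $C$ is induced its neighbors on $C$ are its two cycle-neighbors, neither of which is $v_1$ unless $i=3$ or $i=r$; chasing the eigenvalue equation along $C$ starting from a zero entry and using that consecutive entries propagate zero (away from $v_1v_2$) forces a long run of zeros on $C$, contradicting Claim~\ref{C2} once $r\ge 5$. If instead $v_i\notin V(C)$, then $v_i$ is adjacent to at most one of $v_1,v_2$ plus at most one more zero vertex, so $d_{\Gamma^*}(v_i)\le 2$; but then deleting the vertices with zero entries and applying Rayleigh's principle as in Claim~\ref{C2} (the induced subgraph on the nonzero support has at most $n-1$ vertices, and in fact the low degree of $v_i$ lets one bound $\lambda_1$ by something at most $n-4$) yields $\lambda_1(\Gamma^*)\le n-4$, the desired contradiction. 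The main obstacle I anticipate is the bookkeeping in this final case analysis — precisely locating where the zero vertices can sit relative to $C$ and the negative edge, and squeezing out the contradiction with the bound $\lambda_1(\Gamma^*)>n-4$; everything else is a routine propagation-of-zeros argument driven by the sign pattern of $A(\Gamma^*)$.
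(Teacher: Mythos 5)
Your first half is essentially the paper's argument and is fine: for $i\ge 3$ every edge at $v_i$ is positive (Claim~\ref{C4}), so the eigenvalue equation $\lambda_1(\Gamma^*)x_i=\sum_{v_j\sim v_i}x_j$ with $\mathbf{x}\ge 0$ shows that $x_i=0$ forces all neighbors of $v_i$ to vanish; by Claim~\ref{C2} this immediately disposes of every $v_i$ with $d_{\Gamma^*}(v_i)\ge 2$ (in particular every vertex of $C$ other than $v_1,v_2$), and by Claim~\ref{C3} the only surviving configuration is a pendant vertex $v_i\notin V(C)$ whose unique neighbor is $v_1$ or $v_2$ and also has zero entry.

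The gap is in how you close that last case. Deleting the two zero-entry vertices and applying Rayleigh's principle as in Claim~\ref{C2} only yields $\lambda_1(\Gamma^*)\le\lambda_1\bigl(G-\{v_i,v_1\}\bigr)\le\lambda_1(K_{n-2})=n-3$, not $n-4$: the argument of Claim~\ref{C2} needs \emph{three} zeros to reach $K_{n-3}$. Your parenthetical hope that ``the low degree of $v_i$ lets one bound $\lambda_1$ by something at most $n-4$'' is unsubstantiated and in fact cannot work, since $\lambda_1(\Gamma_{n,0})$ lies strictly between $n-4$ and $n-3$ (indeed $g(n-3)=2n-6>0$ and one checks $g(n-3.5)<0$ for large $n$), while the balanced graph $\Gamma^*-\{v_i,v_1\}$ on $n-2$ vertices can have spectral radius arbitrarily close to $n-3$; so no contradiction is produced. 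Note also that the obvious repairs fail: adding a positive edge $v_iv_2$ creates the negative triangle $v_iv_2v_1v_i$, and adding $v_iv_3$ creates the negative quadrilateral $v_iv_3v_2v_1v_i$. The paper instead adds a positive edge $v_iv_j$ with $j=r-1$ (or $j=3$ when $N_{\Gamma^*}(v_i)=\{v_{r-1}\}$); because $C$ is induced and $r\ge 5$, one checks that no negative $C_3$ or $C_4$ is created, $x_j>0$ by the first part, and Lemma~\ref{FII} gives $\lambda_1(\Gamma')>\lambda_1(\Gamma^*)$, contradicting maximality. You need this (or an equivalent perturbation) to finish; the deletion bound alone does not suffice.
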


\begin{proof} By Claim \ref{C3}, $d_{\Gamma^*}(v_i)\ge 1$ for $i=1,\dots, n$.

First, we show that $x_i>0$ for any $3\le i\le n$ with $d_{\Gamma^*}(v_i)\ge 2$.
Otherwise, $x_i=0$ for some $i$ with $3\le i\le n$ and $d_{\Gamma^*}(v_i)\ge 2$.
By Claim \ref{C2}, one entry of $\mathbf{x}$ at some vertex of  $N_{\Gamma^*}(v_i)$ is nonzero.
As $v_1v_2$ is the unique negative edge of $\Gamma^*$, all edges of $\{v_sv_i : v_s\in N_{\Gamma^*}(v_i)\}$ are positive, so
$\sum\limits_{v_s\in N_{\Gamma^*}(v_i)}\sigma(v_sv_i)x_s>0$. However, from $A(\Gamma^*)\mathbf{x}=\lambda_1(\Gamma^*)\mathbf{x}$, we have $\sum\limits_{v_s\in N_{\Gamma^*}(v_i)}\sigma(v_sv_i)x_s=\lambda_1(\Gamma^*)x_i=0$, a contradiction.

Next, we show that $x_i>0$ for any $r+1\le i\le n$ with $d_{\Gamma^*}(v_i)=1$.
Otherwise, $x_i=0$ for some $i$ with $r+1\le i\le n$ and $d_{\Gamma^*}(v_i)=1$. Let
\[
j=\begin{cases}
r-1 & \mbox{if $N_{\Gamma^*}(v_i)\ne \{v_{r-1}\}$},\\
3 & \mbox{otherwise}.
\end{cases}
\]
By the above argument, $x_j>0$.
Construct a signed graph $\Gamma'$ by adding a positive $v_i v_j$ from $\Gamma^*$ .
As $v_1v_2$ is the unique negative edge of $\Gamma^*$,
it may be easily checked that $\Gamma'$ is  $\mathcal{C}_{3,4}^-$-free and unbalanced.
By Lemma \ref{FII}, we have $\lambda_1(\Gamma')>\lambda_1(\Gamma^*)$, a contradiction.
\end{proof}

\begin{Claim}\label{C7}
$C$ is $5$-cycle, i.e., $r=5$.
\end{Claim}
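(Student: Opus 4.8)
The plan is to argue by contradiction. Suppose $r\ge 6$; I will build from $\Gamma^*$ a $\mathcal{C}_{3,4}^-$-free unbalanced signed graph of order $n$ with strictly larger largest eigenvalue, contradicting the maximality of $\rho(\Gamma)=\lambda_1(\Gamma^*)$. The modification I would use is simply to add the positive edge $v_3v_5$. This is a legitimate input for Lemma~\ref{FII}: since $C$ is induced (Claim~\ref{zh}) and $r\ge 6$, the vertices $v_3$ and $v_5$ are non-adjacent in $\Gamma^*$ (the edge $v_3v_5$ would be a chord of $C$), and by Claim~\ref{C6} we have $x_3,x_5>0$.

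The substantive step is to check that $\Gamma':=\Gamma^*+v_3v_5$ is still $\mathcal{C}_{3,4}^-$-free. By Claim~\ref{C4}, $v_1v_2$ is the unique negative edge of $\Gamma^*$, and since the added edge is positive it remains the unique negative edge of $\Gamma'$; hence every negative cycle of $\Gamma'$, in particular every negative $C_3$ or $C_4$, must pass through $v_1v_2$. A triangle through $v_1v_2$ has all of its edges incident with $v_1$ or $v_2$, so it cannot use the new edge $v_3v_5$, and would therefore already be a negative triangle of $\Gamma^*$, which is impossible. A $C_4$ through $v_1v_2$ that avoids $v_3v_5$ is likewise a negative $C_4$ of $\Gamma^*$, impossible; and a $C_4$ through $v_1v_2$ that uses $v_3v_5$ is forced to be $v_1v_2v_3v_5v_1$ or $v_1v_2v_5v_3v_1$, which would require the edge $v_1v_5$ or the edge $v_2v_5$ of $\Gamma^*$, and both are chords of the induced cycle $C$, hence absent when $r\ge 6$. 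So $\Gamma'$ contains no negative $C_3$ or $C_4$; and since $C$ is still a negative cycle in $\Gamma'$, it is unbalanced. Lemma~\ref{FII} then yields $\lambda_1(\Gamma')>\lambda_1(\Gamma^*)$, so $\rho(\Gamma')\ge\lambda_1(\Gamma')>\rho(\Gamma)$, a contradiction. Hence $r\le 5$, and as $r\ge 5$ we conclude $r=5$.

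The one place that needs care is the case analysis of forbidden cycles in $\Gamma'$, and there the hypothesis $r\ge 6$ enters essentially: it is exactly what turns $v_1v_5$ into a chord of $C$ rather than an edge of $C$, thereby killing the candidate negative $C_4$ $v_1v_2v_3v_5v_1$. For $r=5$ this escape route disappears — indeed $\Gamma^*+v_3v_5$ then genuinely contains the negative $C_4$ $v_1v_2v_3v_5v_1$ — which is why the conclusion is precisely $r=5$. Everything else is bookkeeping supplied by earlier claims: the non-adjacency of $v_3,v_5$ and the positivity of $x_3,x_5$ from Claims~\ref{zh} and~\ref{C6}, and the uniqueness of the negative edge from Claim~\ref{C4}.
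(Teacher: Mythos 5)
Your proof is correct and takes essentially the same route as the paper's: the paper adds the positive chord $v_3v_{r-1}$ (which coincides with your $v_3v_5$ when $r=6$) and invokes Lemma~\ref{FII} together with Claim~\ref{C6} in exactly the same way, leaving the $\mathcal{C}_{3,4}^-$-freeness of the new graph as ``easily checked.'' Your explicit verification of that check --- using the uniqueness of the negative edge $v_1v_2$ from Claim~\ref{C4} to force any negative $C_3$ or $C_4$ through $v_1v_2$, and the inducedness of $C$ from Claim~\ref{zh} to exclude the chords $v_1v_5$, $v_2v_5$, $v_1v_3$ when $r\ge 6$ --- is sound.
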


\begin{proof}
Note that $r\ge5$. If $r\ge6$, then we construct a signed graph $\Gamma'$ by adding a positive $v_3v_{r-1}$, which may be easily checked is $\mathcal{C}_{3,4}^-$-free and unbalanced. By Lemma \ref{FII} and Claim \ref{C6}, $\lambda_1(\Gamma')>\lambda_1(\Gamma^*)$, a contradiction.
\end{proof}

By Claims \ref{zh} and \ref{C7}, $C=v_1v_2v_3v_4v_5v_1$ and it is an induced cycle in $\Gamma^*$. Note that it is a shortest negative cycle containing  the unique negative edge $v_1v_2$ of $\Gamma^*$.
Let $X=N_{\Gamma^*}(v_1)\setminus\{v_2, v_5\}$, $X'=N_{\Gamma^*}(v_2)\setminus\{v_1, v_3\}$ and $Y=V(\Gamma^*)\setminus(X\cup X'\cup\{v_1, v_2, v_3, v_5\})$.
Since $\Gamma^*$ is $\mathcal{C}_{3,4}^-$-free, $X\cap X'=\emptyset$ and for any $u\in X$, $w\in X'$,
$u$ and $w$, $u$ and $v_3$, $w$ and $v_5$ are not adjacent in $\Gamma^*$. Let $|X|=\tau$ and $|X'|=\tau'$. Then $|Y|=n-\tau-\tau'-4$. By Lemma \ref{FII} and Claim \ref{C6}, we see that
$\Gamma^*[X\cup \{v_5\}\cup Y]=(K_{n-\tau'-3}, +)$ and  $\Gamma^*[X'\cup \{v_3\}\cup Y]=(K_{n-\tau-3}, +)$. In particular, $v_5$ is adjacent to all vertices in $X\cup Y$ and $v_3$ is adjacent to all vertices in $X'\cup Y$. The structure of $\Gamma^*$ is displayed in Fig.~\ref{f4}.

\begin{figure}[htbp]
\centering
\begin{tikzpicture}
\draw [red](0,1) .. controls (2,3) and (4,3).. (6,1);
\draw  [black](0,1)--(2,1)--(2.18,-0.17)--(4,1)--(6,1);
\filldraw [black] (0,1) circle (2pt);
\filldraw [black] (2,1) circle (2pt);
\filldraw [black] (4,1) circle (2pt);
\filldraw [black] (6,1) circle (2pt);
\node at (0.6, -0.4) {$X$};
\node at (3, -0.4) {$Y$};
\node at (5.4, -0.4) {$X'$};
\draw (0.6,-0.4) ellipse (0.9 and 0.5);
\draw (3,-0.4) ellipse (0.9 and 0.5);
\draw (5.4,-0.4) ellipse (0.9 and 0.5);
\node at (1.8, -1.2) {$\underbrace{~~~~~~~~~~~~~~~~~}$};
\node at (1.8, -1.7) {$K_{n-\tau'-4}$};
\node at (4.2, -1.2) {$\underbrace{~~~~~~~~~~~~~~~~~}$};
\node at (4.2, -1.7) {$K_{n-\tau-4}$};
\node at (3,2.2) {$-$};
\node at (-0.3,1) {$v_1$};
\node at (2,1.3) {$v_5$};
\node at (4,1.3) {$v_3$};
\node at (6.3,1) {$v_2$};
\draw  [black](0,1)--(-0.15,-0.12)--(2,1);
\draw  [black](0,1)--(1.45,-0.22)--(2,1);
\draw  [black](2,1)--(3.8,-0.16)--(4,1);
\draw  [black](4,1)--(4.57,-0.2)--(6,1);
\draw  [black](4,1)--(6.19,-0.14)--(6,1);
\end{tikzpicture}
\caption{The structure of $\Gamma^*$.}
\label{f4}
\end{figure}
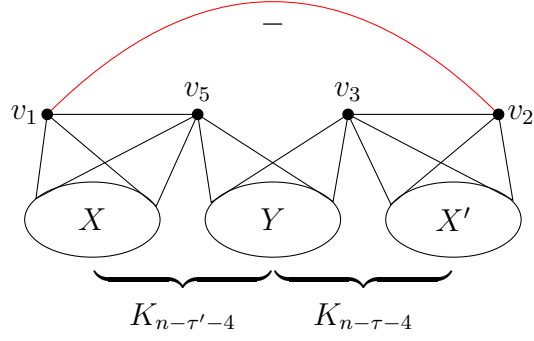
Note that
\[
\lambda_1(\Gamma^*)x_1=-x_2+x_5+\sum\limits_{v_i\in X}x_i
\]
and
\[
\lambda_1(\Gamma^*)x_2=-x_1+x_3+\sum\limits_{v_j\in X'}x_j.
\]
So
\[
(\lambda_1(\Gamma^*)-1)(x_1-x_2)+x_3-x_5=\sum\limits_{v_i\in X}x_i-\sum\limits_{v_j\in X'}x_j.
\]
Similarly, we have
\[
\lambda_1(\Gamma^*)x_5=x_1+\sum\limits_{v_i\in X}x_i+\sum\limits_{v_k\in Y}x_k
\]
and
\[
\lambda_1(\Gamma^*)x_3=x_2+\sum\limits_{v_j\in X'}x_j+\sum\limits_{v_k\in Y}x_k,
\]
so
\[
\lambda_1(\Gamma^*)(x_5-x_3)+x_2-x_1=\sum\limits_{v_i\in X}x_i-\sum\limits_{v_j\in X'}x_j.
\]
Thus
\[
\lambda_1(\Gamma^*)(x_1-x_2)+(\lambda_1(\Gamma^*)+1)(x_3-x_5)=0.
\]
Assume that $x_1\ge x_2$. Then $x_3\le x_5$.

\begin{Claim}\label{C9}
$X'=\emptyset$.
\end{Claim}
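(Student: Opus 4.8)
\textbf{Proof plan for Claim~\ref{C9}.} The plan is to show $X'=\emptyset$ by deriving a contradiction from the assumption $\tau'\ge 1$, using the switching-equivalence reduction of $\Gamma^*$ to one of the graphs $\Gamma_{n,\tau}$ and then invoking Lemma~\ref{N2}(i). First I would record the structural picture from Fig.~\ref{f4}: the only negative edge is $v_1v_2$, the cycle $C=v_1v_2v_3v_4v_5v_1$ is induced, $v_3$ dominates $X'\cup Y$, $v_5$ dominates $X\cup Y$, the sets $X\cup\{v_5\}\cup Y$ and $X'\cup\{v_3\}\cup Y$ each induce a positive clique, and $X,X'$ are completely non-adjacent with $X$ also non-adjacent to $v_3$ and $X'$ non-adjacent to $v_5$. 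Note also $v_4\in Y$ (it is a vertex of $C$ other than $v_1,v_2,v_3,v_5$), so $Y\ne\emptyset$. Under these constraints $\Gamma^*$ is exactly the graph pictured, and by Lemma~\ref{SE} (same underlying graph, and since $v_1v_2$ is the unique negative edge, the positive-cycle class is determined) $\Gamma^*$ is switching isomorphic to $\Gamma_{n,\tau}$ when $X'=\emptyset$; the case $\tau'\ge1$ is a genuinely ``two-sided'' version of $\Gamma_{n,\tau}$ that is not itself one of the $\Gamma_{n,\tau}$.

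The key step is to compare $\lambda_1(\Gamma^*)$ (with $\tau'\ge1$) against $\lambda_1(\Gamma^*{}')$ where $\Gamma^*{}'$ is obtained by moving the vertices of $X'$ to the $X$-side: concretely, for each $w\in X'$, delete the edges from $w$ to $X'\setminus\{w\}$, $\{v_3\}$, $Y$, $\{v_2\}$ as needed and add edges from $w$ to $X$, $\{v_5\}$, $Y$ to make $X\cup X'$ into a clique attached the way $X$ is. One checks this produces a $\mathcal{C}_{3,4}^-$-free unbalanced signed graph with $X'=\emptyset$, hence switching isomorphic to $\Gamma_{n,\tau+\tau'}$, and by Lemma~\ref{N2}(i), $\lambda_1(\Gamma_{n,\tau+\tau'})\le\lambda_1(\Gamma_{n,0})$. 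But the cleanest route, matching the style of the earlier claims, is the Rayleigh/eigenvector-equation route: assuming WLOG (as the paragraph before the claim does) that $x_1\ge x_2$ and hence $x_3\le x_5$, I would use Lemma~\ref{FII} directly. Since $x_5\ge x_3>0$ by Claim~\ref{C6} and, for $w\in X'$, $w$ is not adjacent to $v_5$ while $x_wx_5>0$, adding the positive edge $wv_5$ would strictly increase $\lambda_1$ — provided the resulting graph is still $\mathcal{C}_{3,4}^-$-free and unbalanced. So the real content is: \emph{adding $wv_5$ for $w\in X'$ creates no negative $C_3$ or $C_4$}. A negative $C_3$ through $wv_5$ would need a common neighbor of $w$ and $v_5$ joined to them by edges of opposite sign; but all edges except $v_1v_2$ are positive and $v_1\notin N(v_5)$... one must check that no common neighbor forces $v_1v_2$ into a short cycle. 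Similarly a negative $C_4$ through $wv_5$ must use $v_1v_2$, forcing $w$ or $v_5$ adjacent to $v_1$ or $v_2$: $v_5\sim v_1$, so the $C_4$ would be $v_5 w \, ?\, v_1 v_5$ or $v_5 v_1 v_2 \,?\,$-type; since $w\in X'$ means $w\sim v_2$ only among $\{v_1,v_2,v_5\}$, the $C_4$ $v_1v_5wv_2v_1$ appears — but this cycle has exactly one negative edge $v_1v_2$, so it is \emph{negative}. That is the obstruction: naively adding $wv_5$ can create a negative $C_4$ $v_1v_2wv_5v_1$.

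The fix, and the main obstacle, is to handle that negative $C_4$ by a compensating move: instead of only adding $wv_5$, simultaneously remove the edge $wv_2$ (which is legitimate since, after adding $wv_5$, the cycle $v_1v_2\dots$ through the rest of $C$ still carries the unique negative edge and keeps unbalancedness; and removing $wv_2$ kills the bad $C_4$). The plan is then: for a well-chosen $w\in X'$, let $\Gamma'$ be obtained from $\Gamma^*$ by deleting $wv_2$ and adding positive edges from $w$ to $v_5$ and to all of $X$ (so that $w$ joins the $X$-side cleanly); verify $\Gamma'$ is $\mathcal{C}_{3,4}^-$-free and unbalanced by the same short-cycle bookkeeping used in Claims~\ref{z2}–\ref{C4} (every new edge is positive, the only negative edge is still $v_1v_2\in C$, and no two-chord configuration shortens a negative cycle below length $5$); then apply Rayleigh's principle: $\lambda_1(\Gamma')-\lambda_1(\Gamma^*)\ge \mathbf{x}^\top(A(\Gamma')-A(\Gamma^*))\mathbf{x}=2x_w\big(x_5+\sum_{v_i\in X}x_i-x_2\big)$. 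Using $x_5\ge x_3$ and the eigenvalue equation $\lambda_1(\Gamma^*)x_2=-x_1+x_3+\sum_{v_j\in X'}x_j$, one shows $x_5+\sum_{v_i\in X}x_i-x_2>0$ (intuitively $v_5$ already beats $v_2$ in the eigenvector since $v_5$ sits on the ``dense'' side and has no negative incident edge), giving $\lambda_1(\Gamma')>\lambda_1(\Gamma^*)$, contradicting maximality. Iterating (or applying the move to all of $X'$ at once and then comparing with $\Gamma_{n,\tau+\tau'}$ via Lemma~\ref{N2}(i)) forces $X'=\emptyset$. The delicate point throughout is the case analysis on whether the various auxiliary vertices ($w$, its neighbors, members of $Y$) lie on $C$ or not, exactly as in Claim~\ref{C4}; that is where the bulk of the verification goes, but it is routine given the tools already set up.
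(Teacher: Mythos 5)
Your overall strategy---reattach the vertices of $X'$ to the $v_1,v_5$-side and compare eigenvalues via Rayleigh's principle under the normalization $x_1\ge x_2$, $x_3\le x_5$---is the same as the paper's. The paper's perturbation is slightly different (for every $v_j\in X'$ it deletes both $v_2v_j$ and $v_3v_j$ and adds $v_1v_j$ and $v_5v_j$, giving the Rayleigh difference $\sum_j 2x_j(x_1-x_2+x_5-x_3)\ge 0$), and, importantly, it needs no strict inequality at this stage: from $\lambda_1(\Gamma')=\lambda_1(\Gamma^*)$ it deduces that $\mathbf{x}$ is also a $\lambda_1$-eigenvector of $\Gamma'$, and the eigen-equation at $v_1$ in $\Gamma'$ reads $\lambda_1(\Gamma')x_1=\lambda_1(\Gamma^*)x_1+\sum_j x_j$, forcing $x_j=0$ for all $v_j\in X'$ and contradicting Claim~\ref{C6}.

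The one genuine gap in your version is the justification of the strict inequality $x_5+\sum_{v_i\in X}x_i-x_2>0$. The ingredients you cite ($x_5\ge x_3$ together with the eigen-equation at $v_2$) do not close this: for instance when $X=\emptyset$, combining $\lambda_1 x_2\le x_3+\sum_{v_j\in X'}x_j$ with $\sum_{v_j\in X'}x_j\le\lambda_1 x_3$ only bounds $x_2$ by about $x_5(1+1/\lambda_1)$, which is not below $x_5$. The correct source is the eigen-equation at $v_1$: $\lambda_1(\Gamma^*)x_1=-x_2+x_5+\sum_{v_i\in X}x_i$, so the quantity you need equals $\lambda_1(\Gamma^*)x_1$, and it remains to check $x_1>0$; this follows because $x_1=0$ together with $x_1\ge x_2\ge 0$ would force $x_5+\sum_{v_i\in X}x_i=0$, contradicting $x_5>0$ from Claim~\ref{C6}. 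With that repair (or by settling for $\ge 0$ and running the paper's equality-case argument), your perturbation---delete $wv_2$, add positive edges $wv_5$ and $wu$ for $u\in X$---is also legitimate: the $\mathcal{C}_{3,4}^-$-freeness check you sketch is sound, since the only new negative-$C_4$ threat $v_1v_2wv_5v_1$ is destroyed by deleting $wv_2$, and all other short cycles through the unique negative edge $v_1v_2$ are excluded exactly as in the preceding claims.
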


\begin{proof}
Suppose that $X'\ne\emptyset$. Let $X'=\{v_{j_1}, \dots, v_{j_{\tau'}}\}$ with $\tau'\ge 1$. Construct a signed graph $\Gamma'$ by removing the (positive) edges  $v_2v_{j_i}$, $v_3v_{j_i}$ and adding the positive edges $v_1v_{j_i}$ and $v_5v_{j_i}$ for all $1\le i\le \tau'$, which can be easily checked that is $\mathcal{C}_{3,4}^-$-free and unbalanced. Then
\[
\lambda_1(\Gamma')-\lambda_1(\Gamma^*)\ge\mathbf{x}^T(A(\Gamma')-A(\Gamma^*))\mathbf{x}=\sum\limits_{1\le i\le \tau'}2x_{v_{j_i}}(x_1-x_2+x_5-x_3)\ge0,
\]
so $\lambda_1(\Gamma')=\lambda_1(\Gamma^*)$, implying that
\[
\lambda_1(\Gamma')x_1=\lambda_1(\Gamma^*)x_1+\sum\limits_{1\le i\le \tau'}x_{v_{j_i}},
\]
so $x_{v_{j_i}}=0$ for $1\le i\le \tau'$, contradicting Claim \ref{C6}.
\end{proof}

By Claim \ref{C9}, $X'=\emptyset$. Then $\Gamma^*$ is $\Gamma_{n, \tau}$ for some $\tau$ with $0\le\tau\le n-5$. By Lemma \ref{N2}, $\tau=0, n-5$.
As $\Gamma_{n, n-5}$ is switching equivalent to $\Gamma_{n,0}$,
$\Gamma^*$ is switching equivalent to $\Gamma_{n,0}\cong \Gamma_n$.  Therefore,
$\Gamma$ is switching isomorphic to $\Gamma_{n,0}$.
By Lemma \ref{N2} (ii),  $\lambda_1(\Gamma_{n,0})$ is equal to the largest root of
$\lambda^3-(n-6)\lambda^2-3(n-4)\lambda-n+3=0$.
\end{proof}

\bigskip

\noindent {\bf Acknowledgement.}
This work was supported by National Natural Science Foundation of China (No. 12071158).

\end{document}